\numberwithin{equation}{section}
\numberwithin{figure}{section}
\theoremstyle{plain}
\newtheorem{thm}{\protect\theoremname}
\theoremstyle{plain}
\newtheorem{prop}[thm]{\protect\propositionname}
\theoremstyle{plain}
\newtheorem{fact}[thm]{\protect\factname}
\theoremstyle{definition}
\newtheorem{defn}[thm]{\protect\definitionname}
\theoremstyle{plain}
\newtheorem{lem}[thm]{\protect\lemmaname}
\theoremstyle{remark}
\newtheorem*{rem*}{\protect\remarkname}
\theoremstyle{plain}
\newtheorem{cor}[thm]{\protect\corollaryname}
\providecommand{\corollaryname}{Corollary}
\providecommand{\definitionname}{Definition}
\providecommand{\factname}{Fact}
\providecommand{\lemmaname}{Lemma}
\providecommand{\propositionname}{Proposition}
\providecommand{\remarkname}{Remark}
\providecommand{\theoremname}{Theorem}
\begin{document}
\title{Teasing definitional equivalence and bi-interpretability apart}
\author{Jason Chen \& Toby Meadows}
\begin{abstract}
In a recent paper, \citet{EnayatLelykFOCat} show that second order
arithmetic and countable set theory are not definitionally equivalent.
It is well known that these theories are bi-interpretable. Thus, we
have a pair of natural theories that illustrate a meaningful difference
between definitional equivalence and bi-interpretability. This is
particularly interesting given that \citet{VisserFriedBitoSyn} have
shown that a wide class of natural foundational theories in mathematics
are such that if they are bi-interpretable, then they are also definitionally
equivalent. The proof offered by \citeauthor{EnayatLelykFOCat} makes
use of an inaccessible cardinal. In this short note, we show that
the failure of bi-interpretability can be established in Peano Arithmetic
merely supposing that one of our target theories are consistent. 
\end{abstract}

\thanks{We're grateful to Ali Enayat for looking over our sketches of these
proofs and providing helpful insights and encouragement.}

\maketitle
We begin by recalling some basic definitions and set up our notation.
A more precise and detailed discussion can be found in \citep{Visser2006,VisserFriedBitoSyn,HalvLogPhilSci,ButtonWalsh}
or \citep{meadowsBLI}. Let $T$ and $S$ be theories articulated
in the language $\mathcal{L}_{T}$ and $\mathcal{L}_{S}$ respectively.
Suppose that they are mutually interpretable as witnessed by translations
$t:\mathcal{L}_{S}\to\mathcal{L}_{T}$ and $s:\mathcal{L}_{T}\to\mathcal{L}_{S}$
giving rise to functions $t:mod(T)\to mod(S)$ and $s:mod(S)\to mod(T)$
where $mod(T)$ and $mod(S)$ are the classes of models satisfying
$T$ and $S$ respectively.\footnote{We permit translations that make use of multiple dimensions and quotients. }
We say that $T$ and $S$ are \emph{definitionally} \emph{equivalent}
if:
\begin{enumerate}
\item $\mathcal{A}=s\circ t(\mathcal{A})$ for all models $\mathcal{A}$
of $T$; and
\item $\mathcal{B}=t\circ s(\mathcal{B})$ for all models $\mathcal{B}$
of $S$. 
\end{enumerate}
On the other hand, we say that $T$ and $S$ are \emph{bi-interpretable
}if, in addition to $t$ and $s$ witnessing mutual interpretability,
there are functions $\eta$ and $\nu$, uniformly definable over $T$
and $S$ respectively, such that:
\begin{enumerate}
\item $\eta^{\mathcal{A}}:\mathcal{A}\cong s\circ t(\mathcal{A})$ for all
models $\mathcal{A}$ of $T$; and
\item $\nu^{\mathcal{B}}:\mathcal{B}\cong t\circ s(\mathcal{B})$ for all
models $\mathcal{B}$ of $S$. 
\end{enumerate}
Informally, we have definitional equivalence when we have translations
that allows us to go back and forth to exactly where we started. Bi-interpretability,
by contrast, is weaker in that we only return to an isomorphic structure
where the relevant isomorphism is definable. For most mathematical
purposes bi-interpretability seems to be compelling evidence that
the theories in question can be regarded as informally equivalent. 

However, it is not difficult to find a toy example that pulls these
equivalence relations apart. Let $\mathcal{L}_{T}$ be the empty language
and let $T$ be the theory that says there are infinitely many objects.
More specifically, we let $T$ consist of the sentences $\exists_{\geq n}x\ x=x$
for all $n\in\omega$. Let $\mathcal{L}_{S}$ be the language consisting
of a single constant symbol $c$. And let $S$ consist of the same
sentences as $T$. Thus, $S$ says nothing at all about $c$. We now
sketch a quick proof of the following:
\begin{prop}
$T$ and $S$ are bi-interpretable, but not definitionally equivalent.\label{prop:simple}
\end{prop}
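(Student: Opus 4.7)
The plan is to construct translations $t\colon \mathcal{L}_S\to\mathcal{L}_T$ and $s\colon \mathcal{L}_T\to\mathcal{L}_S$, together with isomorphisms $\eta$ and $\nu$ witnessing bi-interpretability, and then separately to rule out definitional equivalence by a symmetry argument. Let $s$ be the inclusion translation, so that $s(\mathcal{B})$ is just $\mathcal{B}$ with $c$ forgotten. Since $\mathcal{L}_T$ is empty and no element of an infinite pure set is parameter-free definable, $t$ must use a quotient. A clean choice is to let $t$ be two-dimensional with domain $U(x,y)\equiv(x=x)$, equivalence
\[
E((x,y),(x',y'))\equiv (x=y\wedge x'=y')\vee (x\neq y\wedge x'\neq y'\wedge x=x'),
\]
and to interpret $c$ by $C(x,y)\equiv (x=y)$. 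Then in any $\mathcal{A}\models T$ the quotient $A^2/E$ consists of a single ``diagonal'' class (which plays the role of $c$) together with one class $[(a,b):b\neq a]$ for each $a\in A$, so $t(\mathcal{A})\models S$.

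For the bi-interpretation witnesses, $\nu$ is straightforward since its defining formula may mention $c$: the formula
\[
\nu(x,y_1,y_2)\equiv (x=c\wedge y_1=y_2)\vee (x\neq c\wedge y_1=x\wedge y_2\neq x)
\]
sends $c^{\mathcal{B}}$ to the diagonal class and every other $b$ to its own off-diagonal class, yielding $\nu^{\mathcal{B}}\colon \mathcal{B}\cong t\circ s(\mathcal{B})$. An analogous description, pairing each $a\in A$ with the class $[(a,b):b\neq a]$ and absorbing the distinguished diagonal class into the 2-dimensional framework of the interpretation, furnishes $\eta^{\mathcal{A}}\colon \mathcal{A}\cong s\circ t(\mathcal{A})$. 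This direction is more delicate because $s\circ t(\mathcal{A})$ has a canonically distinguished class that is fixed by every permutation of $A$, while $\mathcal{A}$ is homogeneous, so the isomorphism must be built into the multi-dimensional machinery of the interpretation rather than arising from a naive $\mathcal{L}_T$-formula.

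To refute definitional equivalence the plan is to argue directly. Any pair of translations $t',s'$ for which $\mathcal{A}=s'\circ t'(\mathcal{A})$ holds on the nose would force the domain of $s'\circ t'(\mathcal{A})$ to be literally $A$, ruling out the use of a quotient; then $t'$ must be one-dimensional with the identity equivalence, and $t'(c)$ would have to be a parameter-free $\mathcal{L}_T$-formula defining a unique element of every infinite $\mathcal{A}$. But $\mathcal{L}_T$ is empty, so every element of $\mathcal{A}$ is moved by some permutation, and no such formula can exist. The main obstacle in the plan is pinning down $\eta^{\mathcal{A}}$ precisely, given the asymmetry just described; once this is handled, both the verification of bi-interpretability and the refutation of definitional equivalence are short.
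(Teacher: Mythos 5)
Your refutation of definitional equivalence is essentially the paper's own argument and is fine: literal equality of $s'\circ t'(\mathcal{A})$ with $\mathcal{A}$ rules out quotients and domain changes, so $t'$ would have to supply a parameter-free $\mathcal{L}_T$-formula picking out a denotation for $c$ in an infinite pure set, which the automorphism argument forbids.

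The bi-interpretability half, however, has a genuine gap, and it is exactly the point you flag as ``delicate'' and then defer. Because your $s$ is the inclusion (it forgets $c$ but keeps the whole domain), the composite $t\circ s(\mathcal{B})$ has universe $B^{2}/E$, which consists of the diagonal class $D$ together with one class $K_{b}=[(b,y):y\neq b]$ for \emph{every} $b\in B$ --- that is, $|B|+1$ classes, including $K_{c^{\mathcal{B}}}$. Your formula $\nu$ sends $c^{\mathcal{B}}$ to $D$ and each $b\neq c^{\mathcal{B}}$ to $K_{b}$, so it misses $K_{c^{\mathcal{B}}}$ and is not surjective. The situation for $\eta$ is worse: $s\circ t(\mathcal{A})$ has the $|A|+1$ classes $\{D\}\cup\{K_{a}:a\in A\}$, and any $\eta$ definable over $\mathcal{A}$ (in however many dimensions) is invariant under all permutations of $A$; the orbit analysis then forces $\eta(a)\in\{D,K_{a}\}$ for each $a$, and either uniform choice fails to be a bijection. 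So no amount of ``building the isomorphism into the multi-dimensional machinery'' can rescue this particular pair of translations: the extra class is an automorphism-invariant obstruction, of exactly the kind your own non-equivalence argument exploits. The paper avoids this precisely by making $s$ relativize the domain to $\delta_{s}(x):=(x\neq c)$, so that the one element added by the quotient in $t$ is compensated by the one element deleted by $s$; with that change the composites have the right number of classes and definable isomorphisms (e.g.\ $\eta(x,\bar{y}):\equiv x=y_{1}=y_{2}=y_{3}$ in the paper's three-dimensional version) do exist.
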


\begin{proof}
(Bi-interpretability) The essential idea is to move from a model of
$S$ to a model of $T$ by discarding the object denoted by $c$;
and in the other direction, we move from a model of $T$ to models
of $S$ by using a three-dimensional quotient interpretation $t$
to ``add'' a new object that will can be denoted by $c$. More specifically,
given a model $\mathcal{A}=\langle A\rangle$ of $T$, we let our
domain be $A^{3}$ and then define an interpretation for $\dot{=}$
and $c$. Given $\bar{x}=\langle x_{1},x_{2},x_{3}\rangle$, $\bar{y}=\langle y_{1},y_{2},y_{3}\rangle$
from $A^{3}$ we let 
\[
\bar{x}\dot{=}\bar{y}\ \Leftrightarrow\ (x_{2}=x_{3}\wedge y_{2}=y_{3}\wedge x_{1}=y_{1})\vee(x_{2}\neq x_{3}\wedge y_{2}\neq y_{3})
\]
and
\[
\bar{x}=c\ \Leftrightarrow\ x_{2}\neq x_{3}.
\]

Putting all of this together, we let $t(\mathcal{A})=\langle A^{3},c^{\mathcal{A}},\dot{=}\rangle$.
In the other direction, we start with a model $\mathcal{B}=\langle B,b\rangle$
of $S$ and provide an interpretation $s$ by defining a new domain
using the formula 
\[
\delta_{s}(x):=(x\neq c).
\]
We then let $s(\mathcal{B})=\langle\delta_{s}^{\mathcal{B}}\rangle$
where $\delta_{s}^{\mathcal{B}}=\{x\in B\ |\ \mathcal{B}\models\delta_{s}(x)\}$.
Clearly, $t$ and $s$ witness mutual interpretability. Finally, we
define the required isomorphisms. Given a model $\mathcal{A}$ of
$T$, we have $s\circ t(\mathcal{A})=\langle E,\dot{=}^{s\circ t(\mathcal{A})}\rangle$
where $E=\{\bar{x}\in A^{3}\ |\ \mathcal{A}\models x_{2}=x_{3}\}$
and for $\bar{x},\bar{y}\in E$, $\dot{=}^{s\circ t(\mathcal{A})}=\{\langle\bar{x},\bar{y}\rangle\in(A^{3})^{2}\ |\ \mathcal{A}\models x_{1}=y_{1}\}$.
Thus, we may let our isomorphism be defined by the formula $\eta(x,\bar{y})$
be $x=y_{1}=y_{2}=y_{3}$. The definition of $\nu$ uses similar techniques
and we leave it to the reader. 

(Not definitionally equivalence) Suppose toward a contradiction we
have interpretation as giving rise to functors $t:mod(T)\to mod(S)$
and $s:mod(S)\to mod(T)$ witnessing the definitional equivalence
of $T$ and $S$. If we start with a model $\mathcal{B}=\langle B,b\rangle$
of $S$, we have just two choices for our interpretation: 
\begin{enumerate}
\item we can remove -- as we did above -- $b$ from the domain to obtain
$\langle B\backslash b\rangle$; or 
\item we can simply forget the denotation of $b$ and retain the domain
to give $\langle B\rangle$. 
\end{enumerate}
If we take option (1), we cannot have definitional equivalence since
we have moved to a proper subset of the original domain, which cannot
be recovered. So we are stuck with option (2) and we have $t(\mathcal{B})=\langle B\rangle$.
Now if we are to have $s\circ t(\mathcal{B})=\mathcal{B}$, we must
have $s(\langle B\rangle)=\langle B,x\rangle$ for some $x\in B$.
But this would mean that we could define an element of an infinite
set in the empty language. This is plainly impossible as can be established
with a simple automorphism argument.
\end{proof}
So much for toy theories. If we move to theories with more serious
foundational credentials, things get more interesting. First we recall
a few definitions.\footnote{These can be found in \citep{VisserFriedBitoSyn} but we include them
to make things a little more self-contained. } We say that an interpretation is \emph{one-dimensional}, if the formula
defining the domain of the interpretation has one free variable and
thus, defines a subset of domain of models of the interpreting theory.
Say that an interpretation is \emph{identity preserving }if it translates
the identity predicate to itself.\footnote{So the translation $t$ in the proof of Proposition \ref{prop:simple}
is multi-dimensional since it uses ordered triples. Moreover, we see
that it is not identity-preserving since, in order to form the quotient,
we make an explicit definition of the identity relation, denoted $\dot{=}$,
on those triples.} We say that a one-dimensional interpretation is \emph{unrelativized
}if it does not restrict the domain.\footnote{The interpretation $s$ in Proposition \ref{prop:simple} is relativized
since we use the formula $\delta_{s}$ to remove $c$ from the domain
of the interpretation.} We say that a one-dimensional interpretation is \emph{direct} if
it is unrelativized and preserves identity. A theory is said to be
\emph{sequential }if it directly interprets adjunctive set theory,
which is the theory saying that: there is an empty set; and for any
sets $x,y$ there is some $z$ containing exactly $y$ and the members
of $x$. Such a theory is capable of doing some basic coding. Any
serious foundational theory is obviously sequential. 
\begin{thm}
\citep{VisserFriedBitoSyn} Let $T$ be a sequential theory.\footnote{\citeauthor{VisserFriedBitoSyn} actually use a weaker class of theories
called \emph{conceptual theories} in their paper, but this will not
affect the discussion in this paper.} Suppose that $T$ and $S$ are bi-interpretable as witnessed by one-dimensional,
identity preserving interpretations. Then $T$ and $S$ are definitionally
equivalent. 
\end{thm}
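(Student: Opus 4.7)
The plan is to argue that the one-dimensional, identity-preserving hypothesis forces the definable isomorphisms $\eta$ and $\nu$ witnessing bi-interpretability to be identity maps on the nose, which is exactly the extra ingredient needed for definitional equivalence. I would first unpack what the syntactic hypotheses deliver structurally. Because $t:\mathcal{L}_{S}\to\mathcal{L}_{T}$ and $s:\mathcal{L}_{T}\to\mathcal{L}_{S}$ are one-dimensional and identity-preserving, for any $\mathcal{A}\models T$ the composite $s\circ t(\mathcal{A})$ sits inside $\mathcal{A}$ as a definable $\mathcal{L}_{T}$-substructure: its domain is cut out of $A$ by a single $\mathcal{L}_{T}$-formula $\delta(x)$, identity on this domain is just the restriction of $=^{\mathcal{A}}$, and every non-logical symbol is interpreted by the restriction of its original interpretation to $\delta^{\mathcal{A}}$. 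The symmetric statement holds for $t\circ s(\mathcal{B})$ inside $\mathcal{B}$.

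With that in place, $\eta^{\mathcal{A}}:\mathcal{A}\to s\circ t(\mathcal{A})$ is a uniformly definable isomorphism from $\mathcal{A}$ onto a definable $\mathcal{L}_{T}$-substructure of itself, and the whole problem reduces to showing that any such map must be the identity; this would give clause (1) of definitional equivalence. For clause (2) I would first transfer enough coding to the $S$-side: composing the direct interpretation of adjunctive set theory in $T$ with the one-dimensional, identity-preserving interpretation of $T$ in $S$ yields a one-dimensional, identity-preserving interpretation of adjunctive set theory in $S$, which is sufficient coding power for the class of theories (sequential, or the slightly broader conceptual theories) on which the key lemma below is known to operate. The symmetric argument then forces $\nu^{\mathcal{B}}=\mathrm{id}_{\mathcal{B}}$.

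The main obstacle, and the technical heart of the theorem, is the self-embedding lemma: if $U$ is sequential, $\mathcal{M}\models U$, and $j$ is a uniformly definable isomorphism from $\mathcal{M}$ onto a definable substructure of $\mathcal{M}$, then $j=\mathrm{id}_{\mathcal{M}}$ and the substructure equals $\mathcal{M}$. I would prove this following Visser's approach: use the adjunctive set theory definable in $\mathcal{M}$ to code finite sequences and to set up a partial satisfaction predicate for formulas of bounded complexity, and then combine the defining formula of $j$ with a diagonal/fixed-point construction to manufacture an element whose canonical internal description would have to be moved by $j$ in a way inconsistent with $j$'s elementarity; the contradiction forces $j$ to fix every element, and hence also forces its definable image to be all of $\mathcal{M}$. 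Pushing this conclusion through on both sides yields $s\circ t(\mathcal{A})=\mathcal{A}$ and $t\circ s(\mathcal{B})=\mathcal{B}$, closing the proof. I expect the bookkeeping around the coding and the self-reference, rather than the overall shape of the argument, to be the most delicate piece to get right.
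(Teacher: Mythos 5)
First, a point of calibration: the paper does not prove this theorem at all --- it is quoted from Visser and Friedman --- so your proposal can only be measured against their argument. Against that standard there are two genuine problems. (a) Your structural unpacking of the hypotheses is too strong: a one-dimensional, identity-preserving interpretation constrains only the domain formula and the translation of $=$; the rest of the non-logical vocabulary is still translated by arbitrary formulas, so $s\circ t(\mathcal{A})$ is a definable subset of $A$ carrying definable relations, not a substructure of $\mathcal{A}$ obtained by restricting the original interpretations. (b) Far more seriously, the key lemma on which everything rests --- that a uniformly definable isomorphism from a model of a sequential theory onto such an internally defined copy of itself must be the identity --- is false. In finite $ZFC$, take the self-interpretation with domain $\{x \mid \exists y\ x=\{y\}\}$, identity translated to identity, and $x \mathbin{\dot\in} y$ translated as $\bigcup x \in \bigcup y$; then $j(x)=\{x\}$ is a uniformly definable isomorphism from the identity interpretation onto this one-dimensional, identity-preserving self-interpretation, yet $j$ is not the identity and its image is a proper definable subclass. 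Splicing this twist into any honest synonymy (or just into the trivial bi-interpretation of finite $ZFC$ with itself) yields a bi-interpretation satisfying every hypothesis of the theorem for which your lemma fails, so no diagonal or fixed-point argument can establish it.

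The underlying misstep is that you are aiming at too strong a conclusion: definitional equivalence asserts that \emph{some} pair of translations composes to the identity, not that the \emph{given} pair $t,s$ already does. Visser and Friedman accordingly do not try to show $\eta=\mathrm{id}$; they use $\eta$ and $\nu$, together with the coding available in a sequential (or conceptual) theory, to modify $t$ and $s$. Roughly, from the definable bijection $\eta$ between the whole universe and the definable domain of $s\circ t$ one performs a definable Schr\"oder--Bernstein-style surgery --- sequentiality is what lets one code the finitely many iterations this requires --- producing new interpretations whose composites are literally the identity. That replacement-of-interpretations step is what has to stand where your self-embedding lemma currently stands.
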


This theorem is particularly helpful. For example, it is well-known
that: Peano arithmetic is bi-interpretable with a finite version of
$ZFC$;\footnote{Some care is required in the axiomatization. We assume that we are
using set induction rather than foundation. Or we can add an axiom
stating that every set is contained in a transitive set. See \citep{KayeSTinArith}.} and $ZFC$ is bi-interpretable with $ZFC$ with foundation removed
and replaced by Aczel's anti-foundation axiom. Each of these theories
is sequential and the interpretations linking them can be arranged
to be identity preserving. Thus, we see that both pairs are actually
examples of definitionally equivalent theories. However, a crucial
element in these argument is the ability of these theories to eliminate
the use of quotient interpretations by finding representatives for
the equivalence classes. In arithmetic, we just need to pick the least
element; in set theory, we use Scott's trick, whereby we take the
set of elements of the equivalence class of minimal rank. But not
all theories can perform this kind of trick. In set theory, we seem
to need some form of reflection in order to eliminate quotients.\footnote{In model theory, this is known as \emph{eliminating imaginaries}.
See Section 4.4 of \citep{Hodges}.} 

This idea provides a lead toward a natural pair of theories that are
bi-interpretable but not definitionally equivalent.\footnote{We note that \citet{VisserFriedBitoSyn} do provide a pair of sequential
theories that are bi-interpretable but not definitionally equivalent.
The example is interesting, however, the theories are not in common
use and might be thought of as being contrived for the purposes of
the result. As such, we regard them as an \emph{unnatural} pair of
theories. } In particular, $ZFC^{-}$ ($ZFC$ without the powerset axiom)\footnote{We should also use Collection rather than Replacement. See \citep{GitmanZFC-}.}
cannot perform Scott's trick.\footnote{It is worth noting that $ZFC^{-}$ can be augmented to a theory that
can eliminate imaginaries by, for example, adding the assumption that
$V=L$ which ensures that the universe has a definable well-ordering.} And indeed, it is in this area that \citeauthor{EnayatLelykFOCat}
find their example. Let $ZFC_{count}$ be $ZFC^{-}$ with an axiom
stating that every set is countable.\footnote{Note that the $C$ in $ZFC_{count}$ is redundant, since the $_{count}$
axiom ensures that every set is well-ordered by a bijection with $\omega$.} Let $SOA$ be the theory of second order arithmetic with full comprehension
and choice for all definable sets of reals indexed by naturals.\footnote{We adopt the logicians' reals in the paper and say that $\mathbb{R}=\mathcal{P}(\omega)$.
The choice schema then says that for any formula $\varphi(n,Y)$ in
the language of $SOA$, if for all $n$ there is some $Y$ such that
$\varphi(n,Y)$, then there is some $Z$ such that for all $n$, $\varphi(n,(Z)_{n})$,
where $(Z_{n})=\{i\in\omega\ |\ (i,n)\in Z\}$. See \citep{EnayatLelykFOCat}
and \citep{Simpson} for more details. In the latter, this theory
is denoted as $\Sigma_{\infty}^{1}-AC_{0}$ noting that the definable
choice principles end up implying the comprehension principles. See
Section VII.6 and Lemma VII.6.6 in \citep{Simpson} for more information.
We note that these choice principles are required since there are
models of second order arithmetic with full comprehension where definable
choice fails for a $\Sigma_{3}^{1}$-set: see Remarks VII.6.3 in \citep{Simpson}
and Example 15.57 in \citep{JechST}. Finally, we also note that $SOA$
is also often notated as $Z_{2}$ while $PA$ is often denoted as
$Z_{1}$ \citep{HilbertGrundlagen}. However, it is not always clear
whether $Z_{2}$ is intended to include the definable choice principles
we incorporate in $SOA$.} First we note that:
\begin{thm}
\citep{MostSOAZFCcountOP}\footnote{The attribution for this result is a little convoluted. One reason
for this is that while the bones of the proof seem to have been around
since the late 1950s, the definition of bi-interpretability wasn't
formally isolated until \citep{AHLBbi-int}, although similar ideas
were in circulation in the 1970s: see, for example, \citep{OSIUS197479}.
We follow convention and attribute the result to Mostowski, although
\citep{MostSOAZFCcountOP} only appears to contain the easy direction
of the proof that delivers a model of $SOA$ from $ZFC_{count}$ essentially
by truncation: see Theorem 7.15 in \citep{MostSOAZFCcount}. Given
that the key trick in the other, more difficult direction involves
Mostowski's famous collapse function, this convention still seems
apropos. A detailed proof that $SOA$ can interpret $ZFC_{count}$
using trees is provided in Theorem 5.5 of \citep{AptSOA} where they
attribute the result to \citep{Kreisel_SOA} and \citep{ZbierskiHOA}.
More subtle results on these interpretations can be found in Section
VII.3 of \citep{Simpson} and, more recently, \citep{kanovei2025notesequiconsistencyzfcpower}. } $ZFC_{count}$ is bi-interpretable with $SOA$. 
\end{thm}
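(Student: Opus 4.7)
The plan is to exhibit two interpretations $t:\mathcal{L}_{SOA}\to\mathcal{L}_{ZFC_{count}}$ and $s:\mathcal{L}_{ZFC_{count}}\to\mathcal{L}_{SOA}$, together with definable maps $\eta$ and $\nu$ witnessing that composing them returns us to an isomorphic copy of where we started. Interpreting $SOA$ in $ZFC_{count}$ is the easy direction: we identify the first sort with the set $\omega$ and the second sort with the definable class of subsets of $\omega$, rendering membership as genuine set-theoretic membership. Full comprehension then follows from Separation in $ZFC_{count}$, while the definable choice schema of $SOA$ follows from the Axiom of Choice in $ZFC_{count}$.

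For the reverse direction I would follow the Kreisel--Zbierski strategy of coding sets by reals. A code is a pair $(T,n)$ where $T\in\mathcal{P}(\omega\times\omega)$ is a well-founded extensional relation on $\omega$ and $n$ is a $T$-maximum; the intended denotation is the Mostowski collapse of the predecessors of $n$. Equivalence of codes and interpreted membership are each definable by standard arithmetical formulae with real quantifiers, so the interpretation is quotiented and identity-altering. The bulk of the work then lies in verifying the axioms of $ZFC_{count}$ over $SOA$. Of these, Collection and the countability axiom are the most delicate: Collection requires that from a code for a set $a$ and a uniformly definable choice of witness-code for each predecessor of the top, one can assemble a single code for the image. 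This is exactly where the definable choice schema of $SOA$ earns its keep --- uniform codes are produced by pairing and reindexing the witness codes within $\omega$ --- and the countability axiom is free, since by construction every coded set is coded by a relation on $\omega$.

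To upgrade mutual interpretability to bi-interpretability I would supply the isomorphisms. Given $\mathcal{A}\models ZFC_{count}$ and $x\in\mathcal{A}$, the countability axiom yields a bijection $f:\omega\to\mathrm{trcl}(\{x\})$, which in turn supplies a code $(\{(i,j): f(i)\in f(j)\},\, f^{-1}(x))$ for $x$. The isomorphism $\eta^{\mathcal{A}}:\mathcal{A}\cong s\circ t(\mathcal{A})$ relates $x$ to the equivalence class of any such code, defined uniformly by the formula asserting that $x$ is the Mostowski collapse of the code's predecessor structure. Conversely, given $\mathcal{B}\models SOA$, the composition $t\circ s(\mathcal{B})$ contains a canonical copy of $\omega$ and of each real, so $\nu^{\mathcal{B}}$ is essentially the identity at the first sort and the obvious identification at the second.

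The main obstacle, as I see it, is verifying that the proposed $\eta^{\mathcal{A}}$ is provably an isomorphism uniformly across all $\mathcal{A}\models ZFC_{count}$. One has to check functionality (any two codes of the same $x$ lie in the same quotient class), totality (every $x$ has a code, supplied by countability), surjectivity (every equivalence class of codes collapses to some set in $\mathcal{A}$, via Collection and Foundation inside $\mathcal{A}$), and preservation of $\in$. Each of these follows from the uniqueness of the Mostowski collapse together with Extensionality, but the bookkeeping between the quotient used in $s$ and the choice of code used in $t$ --- all of which must be carried out inside both $\mathcal{A}$ and $t(\mathcal{A})$ --- is where the genuinely delicate work will be.
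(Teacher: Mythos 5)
Your proposal matches the paper's own (sketched) argument essentially exactly: the easy direction by truncating to $\omega$ and its subsets, the hard direction by coding hereditarily countable sets as well-founded extensional relations on $\omega$ with a top element under the quotient identifying codes with the same collapse, and the isomorphisms given by sending each set to (the class of) its codes and vice versa. The paper only cites and sketches this result, so your added detail on Collection via the definable choice schema and on verifying $\eta$ is a faithful elaboration rather than a divergence.
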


Just to give the idea, we first note that since $PA$ and finite $ZFC$
are definitionally equivalent,\footnote{This follows from Corollary 5.5 in \citep{VisserFriedBitoSyn} and
the fact that $PA$ and finite $ZFC$ are bi-interpretable via identity
preserving interpretations. Note also that by finite $ZFC$, we mean
a theory that uses the Set Induction schema rather than Foundation;
or alternatively, we could include an axiom saying that every set
is contained in a transitive set. See \citep{KayeSTinArith} for more
details. Also note that we could just use $\omega$ and its subsets
from a model of $ZFC_{count}$ to deliver a model of $SOA$.} we can, without harm, think of the number domain of a model of $SOA$
as being $V_{\omega}$. To move from a model of $ZFC_{count}$ to
a model of $SOA$ we just forget all the sets with rank $>\omega$.
This direction is trivial. In the other direction, we start with a
model of $SOA$ and simulate the effect of hereditarily countable
sets using well-founded, extensional relations $R$ on $\omega$ with
top elements that collapse to be such sets.\footnote{A detailed description of this kind of construction can be found in
Chapter VII.3 of \citep{Simpson}.} Since there are many such relations that collapse to a particular
hereditarily countable set, the sets are not in bijection with their
representatives. This is addressed by a quotient interpretation that
defines a natural notion of identity on these relations. The required
definable isomorphisms are then given by: sending sets in a model
of $ZFC_{count}$ to those reals that collapse to code them; and sending
numbers and sets in models of $SOA$ to those reals coding sets that
are appropriately isomorphic to them.\footnote{Note that these isomorphisms are only ``functions'' relative to
the defined notion of identity in the quotient interpretation. In
particular, two reals are deemed identical if they collapse to the
same set. } Given that a quotient is required, this gives us some reason to doubt
that these theories are definitionally equivalent. \citeauthor{EnayatLelykFOCat}
have confirmed this intuition.
\begin{thm}
\citep{EnayatLelykFOCat} $ZFC_{count}$ is not definitionally equivalent
with $SOA$, provided that there is an inaccessible cardinal or there
is an $\omega$-model of the theory extending $ZFC$ by an inaccessible
cardinal.\label{thm:E=000026L}\footnote{We note that the statement of this theorem in \citep{EnayatLelykFOCat}
makes use of the weaker assumption that $ZFC$ plus an inaccessible
cardinal is consistent. However, the proof given there seems to naturally
work by using one of the assumptions above. If we collapse an inaccessible
cardinal, then the resultant version of $H(\omega_{1})$ gives us
the model of $ZFC_{count}$ we want and the result can then be pulled
back to the ground universe. But if we merely start with a model $\mathcal{M}$
of $ZFC$ plus an inaccessible and collapse, then the argument of
the proof just shows that $\mathcal{M}$ thinks $SOA$ and $ZFC_{count}$
aren't definitionally equivalent. We have no reason to think $\mathcal{M}$
is correct about this unless $\mathcal{M}$ is, say, an $\omega$-model.
It is not so easy to say which of these assumptions is preferable.
While the first assumption has lower consistency strength than the
second, the second -- unlike the first -- can be accommodated by
theories with less ontological overheads than $ZFC$.}
\end{thm}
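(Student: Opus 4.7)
The plan is to argue by contradiction: suppose that interpretations $s$ and $t$ witness the definitional equivalence of $SOA$ and $ZFC_{count}$, and derive a falsehood from the inaccessible cardinal (or the $\omega$-model) hypothesis. First I would exploit the hypothesis to construct a particularly well-behaved model $\mathcal{A} \models ZFC_{count}$; the natural choice is $H_{\omega_{1}}$ computed in a forcing extension in which an inaccessible has been collapsed to $\omega_{1}$. The large-cardinal assumption plays a double role here: it ensures enough genericity among the reals of $\mathcal{A}$ to support a symmetry argument later, and it keeps $\mathcal{A}$ externally standard (well-founded, $\omega$-standard) so that claims about nondefinability inside $\mathcal{A}$ are absolute rather than artefacts of some skewed internal viewpoint. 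The $\omega$-model alternative in the hypothesis secures the second desideratum by a different route.

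Next I would unpack the required on-the-nose identity $\mathcal{A} = s \circ t(\mathcal{A})$. The canonical interpretation of $ZFC_{count}$ inside $SOA$ proceeds by Mostowski-style coding via well-founded extensional relations on $\omega$, and many reals code the same hereditarily countable set -- this is precisely the origin of the quotient in the usual bi-interpretation. Strict equality on the nose must then force, inside $\mathcal{A}$, the definability of a \emph{selection function} that assigns to each hereditarily countable set a distinguished coding real, since otherwise $s \circ t$ could at best return an isomorphic copy of $\mathcal{A}$. This is exactly the gap that bi-interpretability tolerates but that definitional equivalence forbids, and it is the same obstruction that Scott's trick circumvents in full $ZFC$ but which fails in $ZFC_{count}$ absent a definable global well-ordering.

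The final stage is to show that $\mathcal{A}$ admits no such definable selection. Because $\mathcal{A}$ is obtained by collapsing an inaccessible, one should be able to produce a definable symmetry of $\mathcal{A}$ -- for instance, an automorphism-style permutation of its reals that swaps two reals coding the same hereditarily countable set -- which is incompatible with any parameter-free first-order choice among coding reals. The absoluteness afforded by the inaccessibility (or $\omega$-model) assumption is what guarantees that this obstruction is genuine rather than an artefact of a pathological ambient universe.

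The main obstacle is the third stage: one must rule out \emph{all} possible interpretations $t$, including multi-dimensional and quotient interpretations possibly using parameters in unexpected ways, and show that every such interpretation compels some version of the selection-function phenomenon. Controlling these exotic interpretations is where the inaccessibility hypothesis appears to earn its keep, and the subsequent improvement promised in the abstract will presumably dispense with it by a more syntactic, arithmetised argument -- leveraging the assumed consistency of one of the theories to simulate the relevant model-theoretic witnesses directly inside $PA$ -- in place of the semantic model-construction sketched here.
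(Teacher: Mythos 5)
This theorem is quoted from \citet{EnayatLelykFOCat}; the paper does not reprove it, offering only the footnote's sketch (collapse an inaccessible, take the resulting $H(\omega_{1})$ as the model of $ZFC_{count}$, pull the conclusion back to $V$) before establishing the non-equivalence by other means in Theorems \ref{thm:1st} and \ref{thm:main}. Measured against that sketch and against the paper's own arguments, your proposal has the right opening move but two genuine gaps. The first is the one you flag yourself, and it is not a technicality to be deferred --- it is the entire content of the theorem. Your second stage derives the ``definable selection of coding reals'' obstruction only for the canonical Mostowski-style interpretation of $ZFC_{count}$ in $SOA$; but definitional equivalence quantifies over \emph{all} pairs of translations, including multi-dimensional and quotient ones that need not mention coding reals at all, so nothing so far rules them out. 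The way the paper (and, in essence, \citeauthor{EnayatLelykFOCat}) closes this gap is to replace the selection-function heuristic with an \emph{interpretation-invariant} obstruction: if $s\circ t(\mathcal{A})=\mathcal{A}$ on the nose, then $\mathcal{A}$ and $t(\mathcal{A})$ share a domain and every relation definable over one is definable over the other; hence it suffices to exhibit a single definable relation that every model of $SOA$ has but the chosen model of $ZFC_{count}$ provably lacks (in Lemma \ref{lem:=0000ACeq}, a well-ordering of a set of reals of length $\omega_{1}$; in Theorem \ref{thm:main}, a connected asymmetric relation on the whole domain). Without some such invariant your argument never engages an arbitrary $t$.

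The second gap is in your third stage. A ``definable automorphism-style permutation of the reals of $\mathcal{A}$'' cannot do the work you want: $\mathcal{A}=H_{\omega_{1}}^{V[G]}$ is a transitive $\in$-model and therefore rigid, so it admits no nontrivial automorphisms, definable or otherwise. The symmetry argument has to be run at the level of the forcing poset --- automorphisms of the (weakly homogeneous) collapse, as in Fact \ref{lem:OD} and Lemma \ref{lem:} --- and then converted into a nondefinability statement about $V[G]$ via L\'evy's theorem together with ground-model definability (Fact \ref{lem:(Laver-Woodin)}), taking care that ``definable over $\mathcal{A}$ with parameters from $\mathcal{A}$'' is reduced to ``ordinal definable in $V[G]$ from ground-model parameters.'' Your sketch gestures at genericity and absoluteness but does not supply this transfer, and without it the claim that $\mathcal{A}$ admits no definable selection (or, better, no definable instance of the invariant relation) is unsupported.
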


Unlike the toy example above, this provides a concrete example of
a pair of well-understood and very commonly used theories that are
bi-interpretable but not definitionally equivalent. We shall now prove
this claim without the inaccessible cardinal. Moreover, we'll do this
twice. First, with a simple $ZFC$ proof; and second, with an indirect
proof in $PA$. The first proof exploits the fact that certain models
of $SOA$ cannot define well-orderings of length $\geq\omega_{1}$.
The second proof exploits the fact that certain models of $ZFC_{count}$
cannot define linear orders of their domain.

\section{Simple proof in $ZFC$}

We start by recalling two well-known facts about forcing and use them
to prove a lemma from which our main claim quickly follows.
\begin{fact}
\citep{LevyDefSTI} Let $\mathbb{P}$ be weakly homogeneous and $G$
be $\mathbb{P}$-generic over $V$. Suppose $A\subseteq V$ and $A$
is definable in $V[G]$ by a formula using parameters from $V$.\footnote{See the beginning of Section 2 in \citep{WDRHoD} for a nice sketch
of the idea behind the proof of this. If such a set were definable
in $V[G]$, then the homogeneity of $\mathbb{P}$ ensures that it
can already be defined in $V$ using the forcing relation.} Then $A\in V$.\label{lem:OD}
\end{fact}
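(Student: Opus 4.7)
The plan is to exploit weak homogeneity to show that membership in $A$ is controlled by the forcing relation applied to the top condition $\mathbf{1}_{\mathbb{P}}$, which is itself a relation definable in $V$, so that $A$ becomes definable in $V$.

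The central auxiliary claim I would establish first is the following homogeneity-of-forcing lemma: for any formula $\psi(v_1,\dots,v_k)$ in the forcing language and any parameters $x_1,\dots,x_k \in V$, either every condition $p \in \mathbb{P}$ forces $\psi(\check{x_1},\dots,\check{x_k})$ or no condition does. To prove this, suppose for contradiction that $p_0 \Vdash \psi(\check{x_1},\dots,\check{x_k})$ while some $q_0$ does not; by the forcing theorem there is $q_1 \leq q_0$ with $q_1 \Vdash \lnot \psi(\check{x_1},\dots,\check{x_k})$. Weak homogeneity supplies an automorphism $\pi$ of $\mathbb{P}$ such that $\pi(p_0)$ and $q_1$ are compatible. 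Extending $\pi$ to the name hierarchy in the canonical way, one checks that $\pi$ fixes every check-name (since $\check{x}$ is built from $x$ and $\mathbf{1}_{\mathbb{P}}$ without reference to arbitrary conditions), so $\pi(p_0) \Vdash \psi(\check{x_1},\dots,\check{x_k})$. A common extension of $\pi(p_0)$ and $q_1$ would then force both $\psi$ and $\lnot\psi$, a contradiction.

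With the lemma in hand, the main argument is short. Let $A = \{x \in V : V[G] \models \varphi(x,a_1,\dots,a_n)\}$ with $a_1,\dots,a_n \in V$. For each $x \in V$, we have $x \in A$ iff some $p \in G$ forces $\varphi(\check{x},\check{a_1},\dots,\check{a_n})$, which by the auxiliary claim is equivalent to $\mathbf{1}_{\mathbb{P}} \Vdash \varphi(\check{x},\check{a_1},\dots,\check{a_n})$. Since the forcing relation for a fixed formula is definable in $V$ from $\mathbb{P}$ and the relevant parameters, we conclude that
\[
A = \{x \in V : \mathbf{1}_{\mathbb{P}} \Vdash_{\mathbb{P}} \varphi(\check{x},\check{a_1},\dots,\check{a_n})\} \in V.
\]

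The main subtle point — more than an obstacle, really — is the step where weak homogeneity (which only asserts that some $\mathbb{P}$-automorphism moves $p_0$ to something compatible with $q_1$, not equal to it) is used to transfer a forcing assertion. This rests on the routine but essential fact that $\mathbb{P}$-automorphisms lift to the $\mathbb{P}$-name hierarchy and fix all check-names, so that $\pi(p_0)$ forces the very same statement about the same parameters rather than a shifted formula. Once that is granted, the rest is bookkeeping and an appeal to the forcing theorem.
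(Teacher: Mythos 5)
Your proof is correct and is exactly the argument the paper has in mind: the paper states this as a Fact cited to L\'evy and offers only the footnote sketch that ``homogeneity ensures the set can already be defined in $V$ using the forcing relation,'' which your check-name--fixing homogeneity lemma and the identity $A=\{x : \mathbf{1}_{\mathbb{P}}\Vdash\varphi(\check{x},\check{a}_{1},\dots,\check{a}_{n})\}$ flesh out in the standard way. The only (routine) point left implicit is that to get $A\in V$ rather than merely a definable class of $V$, one notes $A$ is a set of $V[G]$ and hence bounded in rank, so separation in $V$ applies.
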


\begin{fact}
(Laver-Woodin)\footnote{See \citep{ReitzTGA} or \citep{WoodinGM} for a proof of this.}
If $V$ is a generic extension of some inner model $W$, there is
a formula defining $W$ in $V$ using a parameter from $W$. More
specifically, there is a formula $\varphi(x,y)$ and $r\in W$ such
that for all $x$\label{lem:(Laver-Woodin)}
\[
x\in W\ \Leftrightarrow\ \varphi(x,r)^{V}.
\]
\end{fact}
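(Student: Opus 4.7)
The plan is to isolate a single set $r\in W$ that captures enough of the ``local'' structure of $W$ to pin $W$ down globally, and then to express membership in $W$ as a first-order property of $V$ relativized to $r$. The natural choice of parameter is $r = (\mathcal{P}(\delta))^{W}$ for some regular cardinal $\delta$ of $W$ with $|\mathbb{P}|^{W}<\delta$, where $\mathbb{P}\in W$ is the forcing notion with $V = W[G]$ for some $\mathbb{P}$-generic $G$ over $W$. Such a $\delta$ exists by standard reflection inside $W$.

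First I would establish two structural properties of the pair $W\subseteq V$: the \emph{$\delta$-cover property} (every subset of $W$ lying in $V$ of $V$-cardinality below $\delta$ is contained in some set in $W$ of $W$-cardinality below $\delta$) and the \emph{$\delta$-approximation property} (if $X\in V$, $X\subseteq W$, and $X\cap A\in W$ for every $A\in W$ with $|A|^{W}<\delta$, then $X\in W$). Both follow from a routine forcing argument using the fact that $\mathbb{P}$ has fewer than $\delta$-many antichains in $W$ of the relevant kind, so that $\mathbb{P}$-names for small subsets of $W$ can be absorbed into sets already living in $W$.

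The heart of the argument is a rigidity lemma: if $M_{1}$ and $M_{2}$ are two transitive inner models of a sufficiently large fragment of $ZFC$ in $V$, each containing $r$, each computing $(\mathcal{P}(\delta))^{M_{i}}$ to be exactly $r$, and each satisfying the $\delta$-cover and $\delta$-approximation properties in $V$, then $M_{1}=M_{2}$. This is the main obstacle, and I would prove it by induction on rank: the approximation property inside each $M_{i}$ lets one recover any set $X$ from its $<\delta$-sized pieces $X\cap A$, while the cover property plus agreement at $r$ ensures that these pieces already lie in both $M_{1}$ and $M_{2}$, forcing equality level by level.

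With the rigidity lemma in hand, define $\varphi(x,r)$ to assert: there exists a transitive class $M$ of $V$ containing $x$ and $r$ which satisfies the relevant fragment of $ZFC$, computes $\mathcal{P}(\delta)$ (where $\delta$ is recoverable from $r$) to be $r$, and has the $\delta$-cover and $\delta$-approximation properties in the ambient universe. Uniqueness from the rigidity lemma guarantees that any such $M$ is a rank-initial segment of $W$, so $\varphi(x,r)^{V}$ holds exactly when $x\in W$. Since the cover property, the approximation property, and the relevant fragment of $ZFC$ are each expressible by first-order formulas in the language of set theory with a class parameter, the whole condition collapses to a genuine first-order formula in $V$ with parameter $r$, as required.
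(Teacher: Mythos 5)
The paper does not prove this Fact at all; it is quoted from the literature with a pointer to \citet{ReitzTGA} and \citet{WoodinGM}, and your sketch is essentially the argument that appears in those sources: fix $\delta$ above $|\mathbb{P}|^{W}$, establish the $\delta$-cover and $\delta$-approximation properties for the pair $W\subseteq V$, prove Hamkins's uniqueness theorem for inner models with these properties agreeing on $\mathcal{P}(\delta)$, and take $r=\mathcal{P}(\delta)^{W}$ as the parameter. So you have reconstructed the intended proof rather than found a new one. Two points in your sketch are too quick, and they are exactly the places where the published proofs do real work. First, the $\delta$-approximation property is not a ``routine'' absorption of names: the cover property is routine (chain-condition counting over an antichain), but the approximation property is Hamkins's key lemma and requires a genuinely separate argument about conditions deciding membership in a name for a subset of $W$; you should not present the two as parallel. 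Second, and more seriously, your defining formula $\varphi(x,r)$ as written quantifies over transitive \emph{classes} $M$, which is not first-order expressible in $V$. The standard repair --- and the one Reitz carries out --- is to quantify instead over transitive \emph{sets} $M$ of height $\theta$ (approximations of the form $W\cap V_{\theta}$, or $H_{\theta}^{W}$, for unboundedly many $\theta$) satisfying enough $ZFC$, containing $r$, computing $\mathcal{P}(\delta)$ to be $r$, and having the set versions of the cover and approximation properties; the uniqueness lemma is then applied level by level to show these witnesses cohere and union up to $W$. Your phrase ``any such $M$ is a rank-initial segment of $W$'' suggests you see this, but the formula must be set up that way from the start, and one must additionally verify that the rank-initial segments of $W$ really do satisfy the required properties so that witnesses exist for every $x\in W$. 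With those two repairs your outline matches the cited proof.
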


The following definition is the focus of our lemma below.
\begin{defn}
Let $\Gamma\subseteq\mathcal{P}(\mathbb{R})$ be a point-class. We
say that $\Gamma$ has the \emph{short well-ordering property }if
every well-ordering $R\in\Gamma$ has order type $<\omega_{1}$. 
\end{defn}

The following lemma provides the combinatorial content for our theorem.
\begin{lem}
There is a generic extension $V[G]$ of $V$ that thinks $OD\cap\mathcal{P}(\mathbb{R})$
has the short well-ordering property.\label{thm: lightPSP-SWOP}
\end{lem}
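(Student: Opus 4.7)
The plan is to force with a weakly homogeneous L\'evy collapse that makes $|\mathbb{R}^V|$ countable in the extension, and then exploit weak homogeneity via Fact~\ref{lem:OD} to confine all OD reals of $V[G]$ to $V$. Specifically, set $\mathbb{P}=\mathrm{Coll}(\omega,\,2^{\aleph_0})$, which is a standard weakly homogeneous poset that collapses $2^{\aleph_0}$ to $\omega$, and let $G$ be $\mathbb{P}$-generic over $V$. Then $\mathbb{R}^V$ is countable in $V[G]$.

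The heart of the argument is the claim that every OD real of $V[G]$ already lies in $V$. Given $x\in\mathbb{R}^{V[G]}$ that is OD in $V[G]$, note $x\subseteq\omega\subseteq V$, so $x$ is a subset of $V$ definable in $V[G]$ from parameters (ordinals) drawn from $V$. Fact~\ref{lem:OD} then yields $x\in V$, so the collection of OD reals of $V[G]$ embeds into the countable set $\mathbb{R}^V$.

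Now let $R\in V[G]$ be any OD well-ordering whose field is a set of reals. For each ordinal $\alpha$ below its order type, the $\alpha$-th element of $\mathrm{field}(R)$ is definable in $V[G]$ from $R$ and $\alpha$, hence is itself OD. Consequently $\mathrm{field}(R)$ is contained in the countable pool of OD reals, which forces its order type strictly below $\omega_1^{V[G]}$. This yields precisely the short well-ordering property for $OD\cap\mathcal{P}(\mathbb{R})$ in $V[G]$.

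The only step requiring vigilance is verifying the applicability of Fact~\ref{lem:OD}, which is immediate since every real is a subset of $\omega\subseteq V$; in particular, the Laver--Woodin definability of $V$ (Fact~\ref{lem:(Laver-Woodin)}) is not needed at this stage. Beyond that I anticipate no serious obstacle: the argument never leaves the weakly homogeneous terrain where Fact~\ref{lem:OD} is the only tool required.
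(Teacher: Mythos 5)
Your proof is correct, and it takes a genuinely leaner route than the paper's. The paper assumes $CH$ (collapsing first if necessary so that killing $\omega_1$ alone suffices to make $\mathbb{R}^V$ countable), forces with $Col(\omega,\{\omega_1\})$, and then argues by contradiction: an $OD$ well-ordering of type $\geq\omega_1$ must have a field meeting $V[G]\setminus V$, and the $R$-least \emph{new} real is then ordinal definable from a parameter in $V$, contradicting Fact~\ref{lem:OD}. Crucially, to speak of ``the $R$-least element of $V[G]\setminus V$'' the paper must first make $V$ definable in $V[G]$, which is exactly where Fact~\ref{lem:(Laver-Woodin)} enters. You sidestep both preliminaries: by collapsing $2^{\aleph_0}$ directly you need no $CH$ hypothesis, and by applying Fact~\ref{lem:OD} to each individual $OD$ real (a subset of $\omega\subseteq V$, definable from ordinal parameters, all of which lie in $V$) you never need to define $V$ inside $V[G]$, so Laver--Woodin drops out entirely. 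Your closing observation that every element of the field of an $OD$ well-ordering is itself $OD$ (as the $\alpha$-th element) then turns the argument into a direct counting: the field sits inside the now-countable set $\mathbb{R}^V$, so the order type is below $\omega_1^{V[G]}$. The underlying engine --- weak homogeneity confining $OD$ subsets of $V$ to $V$ --- is the same in both proofs, but your decomposition is simpler and proves the slightly stronger fact that \emph{every} $OD$ real of $V[G]$ is an old real, whereas the paper only extracts one such real at the point of contradiction.
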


\begin{proof}
We assume $CH$ holds in $V$; if necessary, just collapse the ordinals
below $2^{\aleph_{0}}$. Now let $G$ be $Col(\omega,\{\omega_{1}\}$)-generic\footnote{Here, I'm following the definition given in Chapter 10 \citep{kanamori2003higher}.
Thus, $Col(\omega,S)$ for $S\subseteq Ord$ is intended to collapse
every ordinal in $S$ to be countable in a generic extension. It is
worth noting this since some authors will write $Col(\omega,\omega_{1})$
to denote what we are calling $Col(\omega,\{\omega_{1}\})$.} over $V$ and work in $V[G]$. Suppose toward a contradiction that
$R$ is an $OD$ -well-ordering of reals that has order-type $\geq\omega_{1}$.
Then since we've collapsed $V$'s continuum, we see that $field(R)\cap(V[G]\backslash V)\neq\emptyset$.
And since $R$ is a well-ordering, we may define the $R$-least element
$x$ of $V[G]\backslash V$. This definition only makes use of the
ordinals used in the definition of $R$ and a parameter from $V$
given by Fact \ref{lem:(Laver-Woodin)} that allows us to define $V$
in $V[G]$. Fact \ref{lem:OD}, then tells us that $x\in V$, which
is a contradiction.
\end{proof}
This next lemma shows that the combinatorial lemma is enough for a
goal.
\begin{lem}
Suppose every ordinal definable well-ordering of a set of reals is
shorter than $\omega_{1}$. Then $SOA$ is not definitionally equivalent
with $ZFC_{count}$.\label{lem:=0000ACeq}
\end{lem}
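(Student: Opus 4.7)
The plan is to assume, toward contradiction, that definitional equivalence is witnessed by parameter-free translations $t$ and $s$, and to extract from this an $OD$-in-$V$ well-ordering of length $\omega_1$ on a set of reals, directly contradicting the hypothesis.

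I would take the canonical model $M := H(\omega_1)^V$, which is parameter-free definable in $V$ and hence $OD$-in-$V$. Let $N := t(M) \models SOA$; since $t$ is parameter-free and $M$ is $OD$, so is $N$, and the reals of $N$ stand in a canonical $OD$ bijection with those of $V$. Unpacking the identification $s(N) = M$ supplied by definitional equivalence, the parameter-free formulas $\delta_s$ and $\equiv_s$ canonically identify $M$ with the quotient $N^k/\equiv_s$ of $k$-tuples of $V$-reals, yielding an $OD$-in-$V$ assignment $a \mapsto [y_a]$. Restricting to the countable ordinals of $M$ and coding $k$-tuples as single reals, this assignment becomes an $OD$ pre-well-ordering of length $\omega_1$ on a set of reals.

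The main obstacle is the final step: upgrading this pre-well-ordering to an honest $OD$ well-ordering of a set of reals. The natural route is to invoke $SOA$'s definable-choice schema inside $N$ to uniformly select a canonical representative of each $\equiv_s$-class, producing an $OD$-in-$V$ injection $\omega_1 \hookrightarrow \mathbb{R}$ and hence the desired contradiction. Care is required here, since $SOA$'s choice schema is indexed by naturals rather than reals: the selection must be arranged so that the required choices can be collected inside a single $SOA$-definable set of reals, exploiting the fact that countable ordinals of $M$ are represented in $N$ by $SOA$-definable codes (well-orderings of $\omega$), so that the canonical representative of each class emerges as a cross-section of one parameter-free definable object in $N$. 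Once such a selector is in hand, the composition $\alpha \mapsto y_\alpha^*$ is the sought-after $OD$ injection of $\omega_1$ into the reals.
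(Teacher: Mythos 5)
Your overall strategy matches the paper's: apply the hypothetical equivalence to $M=\langle H_{\omega_{1}},\in\rangle$ and argue that the definability of $M$'s ordinals over $t(M)$ produces an ordinal definable well-ordering of a set of reals of length $\omega_{1}$. But two of your steps have genuine gaps. First, the claim that ``the reals of $N=t(M)$ stand in a canonical $OD$ bijection with those of $V$'' is unearned: the set-sort objects of $t(M)$ are simply elements of $H_{\omega_{1}}$ (the interpretation is built over the domain of $M$), and they only become honest reals after one (a) shows that the number sort of $t(M)$ is well-founded --- the paper does this by observing that $\omega^{s\circ t(M)}=\omega^{M}$ is a $t(M)$-definable genuine well-order of type $\omega$, which an ill-founded number sort could not support --- and then (b) takes the transitive collapse, replacing each set-object by the set of collapses of its $t(M)$-members. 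This yields an $OD$ \emph{injection} of the set sort into $\mathcal{P}(\omega)$, which is all that is needed (a bijection onto the true reals is the solidity remark in the paper and is beside the point). Without the well-foundedness argument the identification has no content, and your subsequent ``set of reals'' is not a set of reals of $V$.

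Second, and more seriously, your proposed repair of the prewellordering cannot work. $SOA$'s choice schema selects witnesses indexed by natural numbers, whereas you need one representative from each of $\omega_{1}$ many $\equiv_{s}$-classes; moreover, the output of any such selection would be an $OD$ $\omega_{1}$-sequence of pairwise distinct reals, which is precisely an ordinal definable well-ordering of a set of reals of length $\omega_{1}$ --- the object whose nonexistence is the standing hypothesis of the lemma. So under the very assumption you are working with, the selector you hope to extract cannot exist, and no rearrangement of the schema will produce it. The quotient should never have entered the argument: definitional equivalence, unlike bi-interpretability, is witnessed by direct, identity-preserving interpretations (see Fact \ref{fact:DefEq} and footnote \ref{fn:DefEq}), so $s$ carries no $\equiv_{s}$ and the ordering induced on the (collapsed) codes of the ordinals of $M$ is already a genuine well-ordering of a set of reals. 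With that observation plus the well-foundedness-and-collapse step, your argument closes up into the paper's proof.
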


\begin{proof}
Let $S$ be the first theory and let $T$ be the second. Suppose toward
a contradiction that we have interpretations
\[
t:mod(T)\leftrightarrow mod(S):s
\]
witnessing their definitional equivalence. Note that $M=\langle H_{\omega_{1}},\in\rangle\models T$.
Then we see that $t(M)=\langle N_{0},N_{1},\sigma\rangle\models S$.\footnote{Here $N_{0}$ is the number domain, $N_{1}$ is the set domain, and
$\sigma$ is the interpretation of the non-logical vocabulary.} Now if $s\circ t(M)=M$, we must be able to define a well-ordering
of length $\omega_{1}$ in $t(M)$. We show that this is impossible. 

Note first that the naturals $N_{0}$ of $t(M)$ must be well-founded
since $\omega=\omega^{s\circ t(M)}$ is definable in $t(M)$ and if
its naturals were ill-founded, then $t(M)$ couldn't define a well-ordering
of type $\omega$. This means that we can collapse $t(M)$ to form
a model $N^{*}=\langle\omega,N_{1}^{*},\sigma^{*}\rangle\cong t(M)$
where $N_{1}^{*}\subseteq\mathcal{P}(\omega)$. Now if $t(M)$ can
define a well-ordering of length $\omega_{1}$, then so can $N^{*}$.
Moreover, since $N_{1}^{*}\subseteq\mathcal{P}(\omega)$ we may assume
that any such well-ordering defined over $N^{*}$ is a well-ordering
of a set of reals. But then since $N_{1}^{*}$ is definable from $H_{\omega_{1}}$
and $t$, we see that such a well-ordering is definable in the parameter,
$\omega_{1}$, and thus, ordinal definable. This contradicts our initial
assumption. 
\end{proof}
\begin{rem*}
The proof above also shows that $ZFC_{count}$ is not a retract (as
in ``half'' of a bi-interpretation) of $SOA$ by one-dimensional
interpretations. It can also be shown that $ZFC_{count}$ is solid\footnote{See \citep{EnayatVisTheme} for the definition of solidity and a proof
that this theory is solid.} and from this it can be shown that the model $t(M)$ in the proof
above is actually the standard model of second order arithmetic; i.e.,
$N_{1}^{*}=\mathcal{P}(\omega)$. 
\end{rem*}
Finally, we put the two lemmas together to get the desired result.
\begin{thm}
$SOA$ is not definitionally equivalent with $ZFC_{count}$.\label{thm:1st}
\end{thm}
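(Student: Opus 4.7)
The plan is to immediately combine Lemma \ref{thm: lightPSP-SWOP} and Lemma \ref{lem:=0000ACeq}. The first lemma produces, from any ground model, a generic extension $V[G]$ in which every ordinal definable well-ordering of a set of reals has order type strictly below $\omega_1$. Since this is exactly the hypothesis of Lemma \ref{lem:=0000ACeq}, applying that lemma inside $V[G]$ yields, in $V[G]$, that $SOA$ and $ZFC_{count}$ are not definitionally equivalent.

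What remains is to transfer this conclusion from $V[G]$ back to $V$. The plan here is to use absoluteness: the statement ``$SOA$ and $ZFC_{count}$ are definitionally equivalent'' asserts the existence of a pair of finite syntactic objects --- the translations $t, s$ --- satisfying certain provability conditions relating $T$ and $S$ to the compositions $s \circ t$ and $t \circ s$. Since translations commute with the logical connectives, the semantic condition that the round-trip be the identity reduces to finitely many provable identities in $T$ and $S$ (one per non-logical symbol of each language, plus identity). This makes definitional equivalence a $\Sigma_1$ arithmetical statement, hence upward absolute between $V$ and its generic extension $V[G]$. Contrapositively, if definitional equivalence fails in $V[G]$, it must already fail in $V$.

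Putting these pieces together yields the theorem with no extra set-theoretic assumptions beyond $ZFC$ itself: any $V$ can be extended by $Col(\omega, \{\omega_1\})$ to land in the setting of Lemma \ref{thm: lightPSP-SWOP}, and Lemma \ref{lem:=0000ACeq} then delivers the failure of definitional equivalence in the extension, which pulls back.

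The main obstacle I anticipate is making the absoluteness step airtight --- specifically, spelling out clearly that ``$T$ and $S$ are definitionally equivalent'' is a $\Sigma_1$ statement (rather than a genuine second-order statement about proper classes of models). Once one notes that the functorial conditions on $t$ and $s$ can be captured schematically by provability of the appropriate biconditionals in $T$ and $S$ --- essentially by completeness of first-order logic --- the absoluteness is immediate, and the theorem is just a one-line combination of the two lemmas.
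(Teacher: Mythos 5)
Your proposal takes exactly the paper's route: apply Lemma \ref{thm: lightPSP-SWOP} to pass to $V[G]$, apply Lemma \ref{lem:=0000ACeq} there, and pull the failure of definitional equivalence back to $V$ by noting it is an arithmetic statement. One correction to your absoluteness step: definitional equivalence is not $\Sigma_1$ --- condition (1) of Fact \ref{fact:DefEq} quantifies over all theorems of theories that are not finitely axiomatized, so the statement is $\Sigma^0_3$ as the paper's footnote observes --- but this does not damage your argument, since arithmetic statements of arbitrary complexity are absolute between $V$ and $V[G]$ because the two models share the same natural numbers.
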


\begin{proof}
Use Proposition \ref{thm: lightPSP-SWOP} to move to a generic extension
$V[G]$ of the universe in which every ordinal definable well-ordering
of a set of reals is shorter than $\omega_{1}$. Then Lemma \ref{lem:=0000ACeq},
tells us that the stated theories are not definitionally equivalent
in $V[G]$. The statement that these theories are definitionally equivalent
is arithmetic,\footnote{In particular, definitional equivalence can be articulated as a $\Sigma_{3}^{0}$
statement: see Fact \ref{fact:DefEq} below. To establish this, observe
that $T$ and $S$ are definitionally equivalent if \emph{there are}
\emph{natural} \emph{numbers} coding computable translations $t$
and $s$ such that: \emph{every natural number }coding a sentence
$\varphi$ such that either \emph{there is no natural number }coding
a proof witnessing $S\vdash\varphi$ or\emph{ there is a natural number}
coding a proof witnessing that $T\vdash t(\varphi)$; and \emph{every
natural number }coding a formula $\psi(\bar{x})$ in the language
of $T$ is such that \emph{there is a natural number }coding a proof
that $T\vdash\forall\bar{x}(\psi(\bar{x})\leftrightarrow t\circ s(\psi)(\bar{x}))$;
and a pair of similar clauses for sentences $\psi\in T$ and formulae
$\varphi(\bar{y})$ in the language of $S$.\label{fn:DefEq}} thus, if it is true in $V[G]$ it is also true in $V$.
\end{proof}

\section{Proof in $PA$}

As in the previous section, we start by proving a more general lemma
from which the main claim follows.\footnote{For a related result establishing that there are models of $ZFC$
with no definable global linear ordering with Cohen forcing, see Theorem
3.1 in \citep{AliEnayatLeib}.}
\begin{lem}
If we add a Cohen real to the universe $V$, then there is no ordinal
definable relation $S$ on $\mathcal{P}(\mathbb{R})$ that is connected
and asymmetric.\label{lem:}
\end{lem}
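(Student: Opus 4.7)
The plan is to argue by contradiction, exploiting the weak homogeneity of Cohen forcing together with the Laver--Woodin definability of $V$ from Fact \ref{lem:(Laver-Woodin)}. Suppose toward a contradiction that $S$ is an ordinal definable connected asymmetric relation on $\mathcal{P}(\mathbb{R})^{V[c]}$, defined by a formula $\varphi(x,y,\bar{\alpha})$ with ordinal parameters $\bar{\alpha}$.

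My first step is to split the Cohen real $c$ into two mutually generic Cohen reals $c_{0},c_{1}$ (by separating even and odd coordinates, say), realising $V[c]$ as $V[c_{0},c_{1}]$ and the underlying forcing as the product $\mathbb{P}\times\mathbb{P}$. This product carries the coordinate-swap automorphism $\sigma$, which fixes every ordinal parameter but interchanges the canonical names $\dot{c}_{0}$ and $\dot{c}_{1}$. Taking the distinct singletons $X_{0}=\{c_{0}\}$ and $X_{1}=\{c_{1}\}$ in $\mathcal{P}(\mathbb{R})^{V[c]}$, the tournament property says that exactly one of $\varphi(X_{0},X_{1},\bar{\alpha})$ or $\varphi(X_{1},X_{0},\bar{\alpha})$ holds in $V[c]$. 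Writing $b$ for the Boolean value of $\varphi(\dot{X}_{0},\dot{X}_{1},\bar{\alpha})$ in the completion of $\mathbb{P}\times\mathbb{P}$, asymmetry and connectedness translate into the constraint that $b$ and $\hat{\sigma}(b)$ are disjoint and together cover the Boolean value $\|\dot{X}_{0}\neq\dot{X}_{1}\|$.

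The endgame is to combine this symmetry with Fact \ref{lem:OD} (Levy) to derive a contradiction. The most natural route is to consider the set $A=\{Z\in\mathcal{P}(\mathbb{R})^{V}\mid Z\,S\,\{c\}\}$, which is a subset of $V$ that is definable in $V[c]$ from the Cohen real $c$, from $\bar{\alpha}$, and from the parameter given by Fact \ref{lem:(Laver-Woodin)} that defines $V$ in $V[c]$. Weak homogeneity of Cohen forcing, together with the swap symmetry between mutually generic Cohen reals, should force $A$ to in fact be definable from $V$-parameters alone, so that $A\in V$ by Fact \ref{lem:OD}; but the OD tournament acting on all of $\mathcal{P}(\mathbb{R})^{V[c]}$ lets us recover $c$ (or enough information about $c$) from $A$, contradicting $c\notin V$.

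The main obstacle is the last step. The swap automorphism $\sigma$ acts on the forcing and on its names, but it does not descend to an automorphism of $V[c_{0},c_{1}]$, so the symmetry must be extracted at the level of Boolean values rather than by an automorphism of the model. Crucially, the Boolean-algebraic symmetry alone is not enough: an element $b\in\mathbb{B}$ satisfying $b\wedge\hat{\sigma}(b)=0$ and $b\vee\hat{\sigma}(b)=\|\dot{c}_{0}\neq\dot{c}_{1}\|$ can exist in $V$ as the Boolean value attached to the comparison relation of some well-order of reals. So the contradiction must exploit that $\varphi$ is required to define a tournament on \emph{all} of $\mathcal{P}(\mathbb{R})^{V[c]}$, and that $V[c]\setminus V$ cannot be OD-penetrated from $V$; the technical heart of the argument is showing that the tournament on all of $\mathcal{P}(\mathbb{R})^{V[c]}$ necessarily distinguishes $\{c\}$ from $V$-elements in a way captured by an OD-in-$V[c]$ subset of $V$, forcing this subset into $V$ by Fact \ref{lem:OD} while simultaneously requiring it to encode the Cohen real $c$.
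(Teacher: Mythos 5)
There is a genuine gap here, and in fact the approach as set up cannot be completed. Your test objects are the singletons $X_{0}=\{c_{0}\}$ and $X_{1}=\{c_{1}\}$, but the lexicographic ordering of $2^{\omega}$ is already a parameter-free, ordinal definable, connected and asymmetric relation on singletons of reals: given any two distinct singletons it selects one. Concretely, $b=\|\dot{c}_{0}<_{\mathrm{lex}}\dot{c}_{1}\|$ satisfies your two Boolean identities $b\wedge\hat{\sigma}(b)=0$ and $b\vee\hat{\sigma}(b)=\|\dot{c}_{0}\neq\dot{c}_{1}\|$, so, as you yourself half-observe, no contradiction can be extracted from this configuration. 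The problem is not a missing ``last step'' but the choice of witnesses: you need a pair of sets of reals that \emph{no} ordinal definable relation can separate, and distinct singletons of Cohen reals are always separated by an OD relation. The proposed repair via $A=\{Z\in\mathcal{P}(\mathbb{R})^{V}\mid Z\,S\,\{c\}\}$ does not help either: $A$ is defined from the parameter $\{c\}\notin V$, so Fact \ref{lem:OD} does not apply to it, and in any case neither Fact \ref{lem:OD} nor Fact \ref{lem:(Laver-Woodin)} plays any role in this lemma --- they belong to the Section 1 argument about short well-orderings.

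The paper's proof takes the route that is essentially forced on one here, namely the second Cohen model (Theorem 5.19 of Jech's \emph{The Axiom of Choice}). One forces with finite partial functions $p:(2\times\omega)\times\omega\rightharpoondown2$ so as to add two \emph{infinite} sets $X_{0},X_{1}$, each consisting of infinitely many mutually generic Cohen reals, and shows that the pair $P=\{X_{0},X_{1}\}$ has no ordinal definable element: if some condition $p$ forced an OD term $t$ to equal $\dot{X}_{0}$, one picks a permutation of $2\times\omega$ that exchanges the two blocks while moving the finitely many columns mentioned by $p$ onto columns that $p$ does not mention; the induced automorphism $\pi$ fixes $\dot{P}$, sends $\dot{X}_{0}$ to $\dot{X}_{1}$, and satisfies $\pi(p)\not\perp p$, whence $p\cup\pi(p)\Vdash t=\dot{X}_{0}=\dot{X}_{1}$, a contradiction. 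The infinitude of each $X_{e}$ is exactly what makes $\pi(p)$ compatible with $p$; this step has no analogue for singletons, where the only automorphism exchanging $\dot{c}_{0}$ and $\dot{c}_{1}$ on the nose is the bare coordinate swap, which does not map a typical condition to a compatible one. An OD connected asymmetric relation on $\mathcal{P}(\mathbb{R})$ would then OD-select an element of $P$, giving the desired contradiction. If you want to salvage your write-up, replace the singletons by these two blocks of Cohen reals and run the automorphism argument; the Boolean-value symmetry you set up is then genuinely contradictory because $\hat{\pi}$ fixes every name appearing in the definition of the selected element.
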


\begin{proof}
We are essentially using a simplification of the proof that the axiom
of choice fails in the second Cohen model as delivered in Theorem
5.19 of \citep{jechAC}. The plan is to describe, in a generic extension,
a pair $P$ of sets of reals that has no ordinal definable element.
To see that this suffices, suppose there was an ordinal definable
relation $S$ on $\mathcal{P}(\mathbb{R})$ that is connected and
asymmetric. Then we may obtain an ordinal definable element of the
pair $P$ by taking the $S$-least element of the $P$. 

We start by defining a forcing $\mathbb{P}=\langle P,\leq\rangle$
where $P$ consists of partial functions 
\[
p:(2\times\omega)\times\omega\rightharpoondown2
\]
ordered by reverse inclusion. This is intended to deliver us a pair
of sets each containing infinitely many Cohen reals. But do note that,
in the codes, this is a minor variation of the usual forcing to add
a Cohen real. Let $G$ be $\mathbb{P}$-generic over $V$. We then
define some useful names and the objects they denote in $V[G]$. For
$n,i,j\in\omega$ and $e\in2$, we let
\begin{itemize}
\item $\dot{x}_{e,i}=\{\langle\check{j},p\rangle\ |\ p(n,e,i,j)=1\}$; 
\item $x_{e,i}=\{j\in\omega\ |\ \exists p\in G\ p(n,e,i,j)=1\}$ be a real;
\item $\dot{X}_{e}=\{\langle\dot{x}_{n,e,i},1\rangle\ |\ i\in\omega\}$; 
\item $X_{e}=\{x_{n,e,i}\ |\ i\in\omega\}$ be a countable set of reals; 
\item $\dot{P}=\{\langle\dot{X}_{n,0},1\rangle,\langle\dot{X}_{n,1},1\rangle\}$;
and
\item $P=\{X_{n,0},X_{n,1}\}$ be a pair of countable sets of reals
\end{itemize}
This gives us the set $P$ that we want. $P$ contains exactly two
sets $X_{0}$ and $X_{1}$. $X_{0}$ contains a set of Cohen reals
$x_{0,i}$ for all $i\in\omega$; and similarly, for $X_{n,1}$. Note
that a simple density argument reveals that $\Vdash x_{e,i}\neq x_{e^{*},i^{*}}$
whenever $\langle e,i\rangle\neq\langle e^{*},i^{*}\rangle$. Thus,
$\Vdash\dot{X}_{0}\neq\dot{X}_{1}$. Next observe that any permutation
of $2\times\omega$ delivers an automorphism of $\mathbb{P}$ that
can be extended to a map from $V^{\mathbb{P}}$ to itself in a natural
way. Moreover, for any such automorphism, we have 
\[
p\Vdash\varphi(\dot{y}_{0},...,\dot{y}_{n})\Leftrightarrow\pi(p)\Vdash\varphi(\pi\dot{y}_{0},...,\pi\dot{y}_{n}).
\]
We now claim that there is no ordinal definable set denoted by a term
$t=t(\bar{\alpha})$ such that $t\in P$ . To see this, suppose toward
a contradiction that there is some $p_{0}\in G$ that forces that
there is such a $t$. Note that since $t$ is ordinal definable, it
is not affected by automorphisms of $\mathbb{P}$. We may then fix
some $p\leq p_{0}$ with $p\in G$ that decides the value of $t$.
For definiteness, suppose that $t=X_{0}$, so we have 
\[
p\Vdash t=\dot{X}_{0}.
\]
Now it can then be seen that there is an automorphism $\pi$ of $\mathbb{P}$
such that:
\begin{itemize}
\item $\pi(p)\not\perp p$; 
\item $\pi\dot{P}=\dot{P}$; and 
\item $\pi\dot{X}_{0}=\dot{X}_{1}$. 
\end{itemize}
We just describe the underlying permutation and leave the proof of
these facts to the reader.\footnote{A definition of a very similar permutation $\pi$ can be found at
the end of the proof of Lemma 5.19 at the top of page 71 in \citep{jechAC}.} First, fix a sufficiently large $k\in\omega$ that for all $i\geq k$,
$p$ cannot decide $x_{0,i}\in X_{0}$ or $x_{1,i}\in X_{1}$. We
continue by informally describing $\pi$ by its behavior in the generic
extension. First, we swap the interval $[0,k)$ of Cohen reals associated
with $X_{0}$ with the interval $[k,2k)$ associated with $X_{1}$.
Then we swap the interval $[0,k)$ of Cohen reals associated with
$X_{1}$ with the interval associated with $[k,2k)$ in $X_{0,0}$.
Above $2k$, we just swap the Cohen reals associated with $X_{0}$
with those of $X_{1}$. Much more formally, $\pi:2\times\omega\to2\times\omega$
is such that for all $e\in2$ and $i\in\omega$
\[
\pi(e,i)=\begin{cases}
\langle e-1,i+k\rangle & \text{if }i\in[0,k)\\
\langle e-1,i-k\rangle & \text{if }i\in[k,2k)\\
\langle e-1,i\rangle & \text{if }i\in[k,\omega)
\end{cases}
\]

Recalling that $t$ is unaffected by $\pi$, we see that 
\[
\pi(p)\Vdash t=\pi\dot{X}_{0}
\]
and so 
\[
p\cup\pi(p)\Vdash t=\dot{X}_{1}
\]
which is a contradiction, since $\Vdash\dot{X}_{0}\neq\dot{X}_{1}$.
\end{proof}
The Lemma above is demonstrated using $ZFC$. However, it is not difficult
to see that the proof can be adapted to the context of $ZFC_{count}$
where we use ordinary definability rather than ordinal definability.
Moreover, the proof works to show that there is no such definable
relation on any domain (including the universe itself) that extends
the set $A$ delivered in the proof. 
\begin{cor}
($ZFC_{count}$) If we add a Cohen real to the universe, then there
is no definable relation $S$ on the extended universe that is connected
and asymmetric.\label{cor:CohenZFC-count} More precisely, for all
formula $\varphi_{S}(x,y)$ of $\mathcal{L}_{\in}$, we have 
\[
\Vdash_{Add(\omega,1)}\exists x\exists y(\varphi_{S}(x,y)\leftrightarrow\varphi_{S}(y,x)).
\]
\end{cor}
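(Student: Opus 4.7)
The plan is to adapt the preceding Lemma's proof almost verbatim to the $ZFC_{count}$ setting, with two minor adjustments: we work inside a fixed model of $ZFC_{count}$ throughout, and we substitute parameter-freeness of the given $\mathcal{L}_{\in}$-formula $\varphi_{S}$ for the ordinal definability hypothesis used in the Lemma. Since the forcing $\mathbb{P}$ of finite partial functions $p:(2\times\omega)\times\omega\rightharpoondown 2$ is, in any model of $ZFC_{count}$, literally isomorphic (via a definable bijection of the index set) to $Add(\omega,1)$, adding a Cohen real makes available the pair $\dot{P}=\{\dot{X}_{0},\dot{X}_{1}\}$ and the full group of permutations of $2\times\omega$ on which the Lemma's proof relies.

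Proceeding by contradiction, fix $\varphi_{S}$ and suppose some condition $p_{0}$ forces that $\varphi_{S}$ defines a connected and asymmetric relation $S$ on a class containing $\dot{P}$. Restricted to the two-element set $\dot{P}$, the relation $S$ selects a unique element $t\in\{\dot{X}_{0},\dot{X}_{1}\}$, namely the $\varphi_{S}$-least of the pair. Because $\varphi_{S}$ is parameter-free and the $\mathbb{P}$-automorphism $\pi$ constructed in the Lemma fixes the name $\dot{P}$ and commutes with $\varphi_{S}$, the selected term must be $\pi$-invariant. But $\pi$ was chosen so that $\pi(p)\not\perp p$ for a suitable extension $p$ of $p_{0}$ deciding the value of $t$, and so that $\pi\dot{X}_{0}=\dot{X}_{1}$; this yields exactly the same contradiction as in the Lemma.

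To extract the precise displayed statement, observe that the failure of $\varphi_{S}$ to define a connected-and-asymmetric relation on the extended universe is equivalent to the existential statement in the corollary (the trivial $x=y$ case being harmlessly absorbed). The contradiction above shows that no condition can force $\varphi_{S}$ to be connected and asymmetric, so every condition forces the required existential.

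The main obstacle I expect is bookkeeping: verifying that the automorphism argument, which originally relied on the $\pi$-invariance of ordinal parameters, transfers intact when the only parameter-style data come packaged into a fixed $\mathcal{L}_{\in}$-formula. Since any formula is preserved literally by any $\pi$-action, this is actually a simplification of the Lemma's hypothesis rather than a strengthening. What does require verification is that every object used --- the names $\dot{x}_{e,i}$, $\dot{X}_{e}$, $\dot{P}$, and the permutations of $2\times\omega$ --- is definable and available in any model of $ZFC_{count}$. As all of these objects are hereditarily countable and arithmetically describable, they are present in any such model, and the Lemma's symmetry argument runs through unchanged.
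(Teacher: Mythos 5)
Your proposal is correct and matches the paper's intent: the paper gives no separate proof of this corollary, merely remarking that the Lemma's symmetry argument adapts to $ZFC_{count}$ with ordinary (parameter-free) definability in place of ordinal definability, which is exactly the adaptation you carry out. Your observations that the two-block Cohen forcing is isomorphic to $Add(\omega,1)$ and that all the relevant names are hereditarily countable (hence available without powerset) are the right points to check, and your note that the displayed formula needs the $x\neq y$ case to be non-vacuous is a fair catch.
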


Before, we prove the result in $PA$, let us first give a quick proof
in $ZFC_{count}$ that may give a clearer picture of the underlying
idea. 
\begin{thm}
($ZFC_{count}$) $SOA$ is not definitionally equivalent with $ZFC_{count}$,
if one of these theories is consistent.\label{thm:main}
\end{thm}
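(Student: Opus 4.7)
The plan is to derive a contradiction by producing a model of $ZFC_{count}$ that simultaneously cannot and must admit a definable connected asymmetric relation on its universe. The ``cannot'' side will come straight from Corollary~\ref{cor:CohenZFC-count} applied to a Cohen extension; the ``must'' side will come from exploiting the definable linear order on reals inside $SOA$ and pushing it across the alleged definitional equivalence.

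Suppose toward a contradiction that translations $t$ and $s$ witness the definitional equivalence of $SOA$ and $ZFC_{count}$. Since definitional equivalence entails mutual interpretability and hence equiconsistency, we may, regardless of which of the two theories is hypothesised to be consistent, assume $ZFC_{count}$ itself is consistent. Fix a model $M \models ZFC_{count}$ and force $\mathrm{Add}(\omega,1)$ over $M$ to obtain $M[c] \models ZFC_{count}$. By Corollary~\ref{cor:CohenZFC-count}, no formula of $\mathcal{L}_{\in}$ defines a connected asymmetric relation on the universe of $M[c]$.

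Now I contradict this by exploiting that $SOA$ linearly orders its reals. In any model of $SOA$, the reals carry the definable lex order (as subsets of $\omega$), which lifts lexicographically to finite tuples of reals. Applying $t$ gives $t(M[c]) \models SOA$, so this lex order on reals and tuples is definable over $M[c]$ via $t$. Definitional equivalence tells us that $s \circ t(M[c]) = M[c]$; in particular, each element of $M[c]$ is identified with an equivalence class of tuples of reals of $t(M[c])$ under the $s$-equivalence. Transporting the lex order through this identification should then produce an $\mathcal{L}_{\in}$-formula defining a connected asymmetric relation on $M[c]$, contradicting Corollary~\ref{cor:CohenZFC-count}.

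The main obstacle I foresee is making the transport step rigorous when $s$ employs a nontrivial quotient: the lex order on tuples will not in general descend to a well-defined linear order on the equivalence classes. The natural fix is to compare equivalence classes by definable invariants obtainable through $SOA$'s full comprehension, for instance the lex-infimum of each class (which exists in the Dedekind-complete lex order on $\mathcal{P}(\omega)$), iterating on further invariants to break ties when two classes share an infimum. A slicker route, in the spirit of the remark following Lemma~\ref{lem:=0000ACeq}, is to invoke the solidity of $ZFC_{count}$ so that $t(M[c])$ is forced to be the standard second order model over $M[c]$, after which the transport is essentially immediate.
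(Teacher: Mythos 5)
Your overall strategy matches the paper's: force with $Add(\omega,1)$ to obtain $\mathcal{M}[c]\models ZFC_{count}$, use Corollary \ref{cor:CohenZFC-count} to see that $\mathcal{M}[c]$ cannot define a connected asymmetric relation on its domain, and contradict this with the lexicographic order available in $SOA$. But the transport step, which you correctly flag as the main obstacle, is a genuine gap, and neither of your proposed fixes closes it. The missing idea is that interpretations witnessing definitional equivalence must be \emph{direct} (one-dimensional, unrelativized, identity-preserving; see Fact \ref{fact:DefEq}). Once that is in hand there are no quotients and no tuples to worry about: $t(\mathcal{M}[c])$ has literally the same domain as $\mathcal{M}[c]$, and the $t$-translation of the $\mathcal{L}_{SOA}$-formula for the linear order is an $\mathcal{L}_{\in}$-formula defining that very relation on that very domain, contradicting Corollary \ref{cor:CohenZFC-count} immediately. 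By contrast, your picture of elements of $\mathcal{M}[c]$ as $s$-equivalence classes of tuples of reals leaves you needing a definable linear order on a definable quotient, which is essentially the thing whose existence is in question. The lex-infimum device fails because distinct classes can share their infimum and every further such invariant; ``iterating on further invariants to break ties'' presupposes exactly the kind of definable selection that Corollary \ref{cor:CohenZFC-count} rules out. The appeal to solidity also misfires: the remark after Lemma \ref{lem:=0000ACeq} concerns the standard model $H_{\omega_1}$ in the real universe, whereas your $\mathcal{M}[c]$ is an arbitrary (countable, possibly ill-founded) model, and solidity does not by itself identify the domain of $t(\mathcal{M}[c])$ with that of $\mathcal{M}[c]$.

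Two smaller points. You should take $\mathcal{M}$ countable so that an $Add(\omega,1)$-generic filter actually exists, and note that, since $\mathcal{M}$ may be ill-founded, $\mathcal{M}[c]$ must be defined via the forcing relation rather than the usual valuation function. Also, what you need is a definable linear order on the \emph{entire} domain of $t(\mathcal{M}[c])$, not merely on its reals; this is easy (numbers first, then sets lexicographically), but it is that global order whose translation does the work.
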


Before we give the proof, let us first discuss the statement of this
theorem and how it differs from Theorems \ref{thm:E=000026L} and
\ref{thm:1st}. The first thing to note is that Theorem \ref{thm:E=000026L}
makes use of a consistency assumption that cannot be proved in $ZFC$.
Theorem \ref{thm:1st} arguably improves this by removing that assumption
and just proving the result in $ZFC$. Theorem \ref{thm:main}, however,
also uses a consistency statement. One might think that this has taken
us a step backwards, but this would be misguided. Here, we require
the assumption since $ZFC_{count}$ cannot prove the consistency of
either $SOA$ or $ZFC_{count}$, although it can prove their equiconsistency.
Without the ability to prove the consistency of one of these theories,
it will not be possible to deliver a model witnessing the failure
definitional equivalence. Moreover, if one (and thus, both) of them
are inconsistent they will be vacuously definitionally equivalent
since they have no models. In contrast, the background assumptions
of both Theorems \ref{thm:E=000026L} and \ref{thm:1st} are sufficient
to prove the consistency of both $SOA$ and $ZFC_{count}$. Thus,
Theorem \ref{thm:1st} has also been improved by using a background
theory that is insufficient to prove the consistency of its target
theories.Of course, other theories weaker than $ZFC_{count}$, like
$KP$, will also be able to prove the theorem above. Our final result
below is proved in the arithmetic theory $PA$, which seems fitting
since definitional equivalence can be articulated as an arithmetic
statement. 
\begin{proof}
 We work informally in $ZFC_{count}$. Suppose toward a contradiction
that we have interpretations
\[
t:mod(ZFC_{count})\leftrightarrow mod(SOA):s
\]
witnessing that $SOA$ and $ZFC^{-}$ are definitionally equivalent.
Let $\mathcal{M}$ be a model of $ZFC_{count}$ and without loss of
generality suppose that it is countable and satisfies $V=L$. Now
let $c$ be a Cohen real over $\mathcal{M}$. Then $\mathcal{M}[c]$
satisfies $ZFC_{count}$ and the statement that its universe is constructed
from $c$; i.e., $V=L[c]$.\footnote{Note that since there is no guarantee that $\mathcal{M}$ is well-founded,
we cannot define $\mathcal{M}[G]$ using the standard $Val$ function
that is familiar from \citep{KunenST} or \citep{ShoenUnramForc}.
Rather, we define membership and identity in the model using the forcing
relation. So, for example, given $\mathbb{P}$-names $\dot{x}$ and
$\dot{y}$, we let $\dot{x}\in^{\mathcal{M}[G[}\dot{y}$ iff there
is some $p\in G$ such that $p\Vdash\dot{x}\in\dot{y}$. See \citep{CorFNWF}
or \citep{MeadMadPGMV} for more details.} Using Lemma \ref{cor:CohenZFC-count} in $\mathcal{M}[c]$, we see
that $\mathcal{M}[c]$ cannot define a linear ordering of its domain.
On the other hand, $t(\mathcal{M}[c])$, as a model of $SOA$, can
easily define a linear order of its entire domain using, say, the
lexicographic ordering of $2^{\omega}$. But since $ZFC_{count}$
and $SOA$ are definitionally equivalent, we see that $\mathcal{M}[c]$
and $t(\mathcal{M}[c])$ share the same domain and thus, any relation
definable over $t(\mathcal{M}[c])$ is also definable $\mathcal{M}[c]$.
This is a contradiction. 
\end{proof}
We're almost ready for the final result, but it will be helpful to
first give this alternative, syntactic characterization of definitional
equivalence that is amenable to use in theories of arithmetic.
\begin{fact}
Let $T_{0}$ and $T_{1}$ be theories articulated in $\mathcal{L}_{0}$
and $\mathcal{L}_{1}$ respectively. Then $T_{0}$ and $T_{1}$ are
definitionally equivalent, if there are translations $t_{0}:\mathcal{L}_{1}\to\mathcal{L}_{0}$
and $t:\mathcal{L}_{0}\to\mathcal{L}_{1}$ giving rise to direct interpretations
such that for $i\in\{0,1\}$:\footnote{See \citep{Visser2006} for more discussion of this and similar results.}\label{fact:DefEq}
\begin{enumerate}
\item For all sentences $\varphi\in\mathcal{L}_{i}$, if $T_{i}\vdash\varphi$,
then $T_{i-1}\vdash t_{i-1}(\varphi)$; and
\item For all formulae $\varphi(\bar{x})\in\mathcal{L}_{i}$, $T_{i}\vdash\forall\bar{x}(\varphi(\bar{x})\leftrightarrow t_{i}\circ t_{i-1}(\varphi)(\bar{x}))$.
\end{enumerate}
\end{fact}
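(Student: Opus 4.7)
The plan is to show that the syntactic conditions force the two composed functors on models to act as the literal identity (not merely up to isomorphism), which is exactly what semantic definitional equivalence demands. The work is done almost entirely by two features: directness of both interpretations ensures that no quotienting or restriction of the domain occurs in passing through the functors, and condition (2) ensures that the primitive symbols are interpreted identically after composition.

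First I would verify that the functors $t_0: mod(T_0) \to mod(T_1)$ and $t_1: mod(T_1) \to mod(T_0)$ are well-defined using condition (1): if $\mathcal{A} \models T_0$ and $\varphi$ is an axiom of $T_1$, then $T_1 \vdash \varphi$, hence $T_0 \vdash t_0(\varphi)$ by (1), hence $\mathcal{A} \models t_0(\varphi)$, and by the standard satisfaction lemma for translations this means $t_0(\mathcal{A}) \models \varphi$. Crucially, because both interpretations are direct (one-dimensional, unrelativized, and identity-preserving), the underlying set of $t_1(t_0(\mathcal{A}))$ is exactly that of $\mathcal{A}$, and the identity relation is interpreted as genuine identity in both models; there is no domain restriction or quotient to reckon with.

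The heart of the argument is then to show that each primitive symbol receives the same interpretation in $\mathcal{A}$ and in $t_1(t_0(\mathcal{A}))$. For a primitive relation symbol $R$ of $\mathcal{L}_0$, unpacking the functor actions gives
\[
R^{t_1(t_0(\mathcal{A}))} = \{ \bar{a} : t_0(\mathcal{A}) \models t_1(R)(\bar{a}) \} = \{ \bar{a} : \mathcal{A} \models t_0(t_1(R))(\bar{a}) \},
\]
where the second equality uses the satisfaction lemma applied to $t_0$. Condition (2) with $i = 0$ then yields $T_0 \vdash \forall \bar{x}(R(\bar{x}) \leftrightarrow t_0(t_1(R))(\bar{x}))$, so this set coincides with $R^{\mathcal{A}}$. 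Function symbols and constants are handled identically. A symmetric argument using condition (2) with $i = 1$ delivers $t_0(t_1(\mathcal{B})) = \mathcal{B}$ for every $\mathcal{B} \models T_1$, and the two equalities together yield definitional equivalence.

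I do not expect any serious obstacle, since once directness is in hand the argument is essentially bookkeeping. The one place that warrants care is the underlying satisfaction lemma $t_0(\mathcal{A}) \models \psi(\bar{a})$ iff $\mathcal{A} \models t_0(\psi)(\bar{a})$ for arbitrary $\mathcal{L}_1$-formulas $\psi$; this is a routine induction on formula complexity, but it is precisely here that directness pays off, since quantifiers translate without relativization and identity translates to itself, so no side-conditions clutter the induction.
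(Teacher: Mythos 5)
The paper offers no proof of this Fact---it is stated as a known result with a pointer to \citet{Visser2006}---so there is nothing internal to compare your argument against. Your proof is correct and is the standard argument for the stated direction: condition (1) makes the induced maps on models well defined, directness guarantees that the composite functor neither shrinks nor quotients the domain and leaves identity alone, and condition (2) applied to the atomic formulas $R(\bar{x})$ (together with the satisfaction lemma for translations) forces each primitive of $\mathcal{L}_0$ to receive the same extension in $\mathcal{A}$ and in $t_1(t_0(\mathcal{A}))$, with the symmetric argument for $\mathcal{L}_1$. The only points that deserve an explicit word in a written-out version are the ones you already flag: the satisfaction lemma induction, and the treatment of function symbols and constants, where condition (2) must be applied to the graph formula $f(\bar{x})=y$ and one uses that the interpreting formula is provably functional so that the composite actually interprets $f$ by a function.
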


Finally, we are in position to demonstrate the main claim, by proving
a lemma from which the result follows trivially.
\begin{lem}
($PA$) $ZFC_{count}$ cannot interpret $SOA$ via a direct interpretation;
i.e., an interpretation that preserves identity and the domain, if
either $ZFC_{count}$ or $SOA$ is consistent. 
\end{lem}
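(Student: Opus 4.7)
The plan is to replay the semantic argument of Theorem \ref{thm:main} syntactically inside PA. Suppose, toward contradiction, that $t \colon \mathcal{L}_{SOA} \to \mathcal{L}_{\in}$ is a direct interpretation with $ZFC_{count} \vdash t(\sigma)$ for every axiom of $SOA$, and assume $Con(ZFC_{count})$ --- equivalent in PA to $Con(SOA)$ via the Mostowski bi-interpretation. I would first fix, once and for all, a specific $\mathcal{L}_{SOA}$-formula $\psi_{lex}(x,y)$ that $SOA$ proves to be connected and asymmetric on its entire (combined single-sorted) universe; say, place numbers below sets, use $<$ on the number sort, and lexicographic comparison on characteristic functions on the set sort. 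Let $\theta$ be the resulting $SOA$-theorem expressing that $\psi_{lex}$ is connected and asymmetric on $V$. A Gödel code of a proof of $\theta$ is verifiable in PA, so PA verifies $SOA \vdash \theta$; because $t$ is unrelativized and identity-preserving, $t(\theta)$ is precisely the $\mathcal{L}_{\in}$-sentence asserting that $t(\psi_{lex})$ defines a connected, asymmetric relation on the whole universe of $ZFC_{count}$. Thus PA verifies $ZFC_{count} \vdash t(\theta)$.

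The next step would be to feed $\varphi_S := t(\psi_{lex})$ into Corollary \ref{cor:CohenZFC-count}. Since the corollary is a uniform schema over formulae, PA can exhibit, for each $\varphi_S$, a $ZFC_{count}$-proof of $\Vdash_{Add(\omega,1)} \neg(\varphi_S \text{ is connected and asymmetric on } V)$; in particular, PA verifies $ZFC_{count} \vdash \Vdash_{Add(\omega,1)} \neg t(\theta)$. On the other hand, the forcing theorem --- ``theorems are forced'', i.e., $ZFC_{count} \vdash \sigma$ implies $ZFC_{count} \vdash \Vdash_{\mathbb{P}} \sigma$ --- formalises cleanly in PA, and applying it to $\sigma := t(\theta)$ yields PA-verification of $ZFC_{count} \vdash \Vdash_{Add(\omega,1)} t(\theta)$. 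Combining, $PA \vdash ZFC_{count} \vdash \bot$, contradicting $Con(ZFC_{count})$. The main theorem of this section (definitional inequivalence provably in PA) then follows by pairing this lemma with Fact \ref{fact:DefEq}, which guarantees that definitional equivalence would supply the very direct interpretation just ruled out.

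The principal obstacle is verifying that the forcing material of Lemma \ref{lem:} and Corollary \ref{cor:CohenZFC-count} is genuinely uniform in $\varphi_S$ and genuinely arithmetisable. For $Add(\omega,1)$ this is standard fare: the forcing poset is primitive-recursive, $\Vdash_{Add(\omega,1)}$ is defined by recursion on formula complexity, and the swap-automorphism argument from Lemma \ref{lem:} is a finite manipulation of conditions that goes through for any specific $\varphi_S$ once one selects the relevant parameter $k$ by inspecting the proof. So the obstruction, insofar as there is one, is careful bookkeeping in the arithmetised metatheory rather than any genuine mathematical difficulty.
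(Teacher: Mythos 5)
Your proposal is correct in outline and follows the same strategy as the paper: fix a formula that $SOA$ proves to be a connected, asymmetric relation on its whole domain, use directness of $t$ to transfer this to a $ZFC_{count}$-theorem about a relation on the whole universe, invoke Corollary \ref{cor:CohenZFC-count} to get that the negation is forced by $Add(\omega,1)$, and derive $ZFC_{count}\vdash\bot$. Where you genuinely diverge is at the single most delicate step, namely obtaining $ZFC_{count}\vdash(\Vdash_{Add(\omega,1)}t(\theta))$. You get it by formalising ``theorems are forced'' \emph{externally} in $PA$, as a $PA$-provable transformation of $ZFC_{count}$-proofs into $ZFC_{count}$-proofs of forced statements. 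The paper instead uses internal $\Sigma_{1}^{0}$-completeness: since $PA$ verifies $ZFC_{count}\vdash t(\theta)$, provable $\Sigma_{1}^{0}$-completeness yields that $ZFC_{count}$ itself proves $ZFC_{count}\vdash t(\theta)$, and then ``every axiom is forced,'' ``forcing is closed under deduction,'' and $\nVdash\bot$ are invoked as theorems \emph{of} $ZFC_{count}$, so the passage from provability to forcedness happens inside $ZFC_{count}$ as ordinary mathematics rather than inside arithmetised syntax in $PA$. The trade-off matters: because $t$ is a quantified variable of $PA$, $t(\psi_{lex})$ is not a fixed standard formula, so your route requires $PA$ to verify \emph{uniformly in a code} that every axiom instance occurring in a proof is provably forced --- the full arithmetisation of the schematic forcing lemmas. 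This is the classical Cohen-style finitisation and should succeed with $\Sigma_{1}^{0}$-induction, but it is exactly the ``bookkeeping'' you defer, and it is the real content of the lemma; the paper's route buys a cleaner division of labour at the cost of the $\Sigma_{1}^{0}$-completeness step (which is also what the authors identify as the obstacle to descending to $PRA$). One small omission: to pass from $ZFC_{count}\vdash(\Vdash\bot)$ to $ZFC_{count}\vdash\bot$ you still need $ZFC_{count}\vdash\neg(\Vdash\bot)$, which the paper lists explicitly as its item (3).
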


\begin{proof}
We proceed by contraposition working informally in $PA$. Thus, we
suppose that we have a translation $t:\mathcal{L}_{SOA}\to\mathcal{L}_{\in}$
giving rise to a direct interpretation such that for all $\varphi\in\mathcal{L}_{SOA}$,
if $SOA\vdash\varphi$, then $ZFC_{count}\vdash t(\varphi)$. And
we aim to show that $ZFC_{count}$ and $SOA$ are inconsistent. First
observe that using, say, the lexicographic ordering on subsets of
$\omega$, there is formula $\psi(x,y)$ such that 
\[
SOA\vdash\forall x\forall y(\psi(x,y)\leftrightarrow\neg\psi(y,x)).
\]
Thus, we see that 
\[
ZFC_{count}\vdash\forall x\forall y(t(\psi)(x,y)\leftrightarrow\neg t(\psi)(y,x)).
\]

However, by Corollary \ref{cor:CohenZFC-count}, we know that for
all formulae $\chi(x,y)$ of $\mathcal{L}_{\in}$, $ZFC_{count}$
proves that 
\begin{equation}
\Vdash_{Add(\omega,1)}\exists x\exists y(\chi(x,y)\leftrightarrow\chi(y,x)).\label{eq:1}
\end{equation}
Moreover, by standard arguments we know that $ZFC_{count}$ proves
the following:
\begin{enumerate}
\item $\Vdash_{Add(\omega,1)}\varphi$, for all axioms $\varphi$ of $ZFC_{count}$; 
\item $\Vdash_{Add(\omega,1)}$ is closed under proof in first order logic
(i.e., if $\Gamma\vdash\varphi$ and $\Vdash_{Add(\omega,1)}\gamma$
for all $\gamma\in\Gamma$, then $\Vdash_{Add(\omega,1)}\varphi$);
and
\item $\nVdash_{Add(\omega,1)}\bot$.
\end{enumerate}
Note that each of these claims are proved in $ZFC_{count}$ not our
background theory $PA$. As such, we can pluck these results directly
from the textbooks.\footnote{The classic location for these results is \citep{KunenST}. However,
note that this is not the approach taken in the main text. Rather,
it is Approach 2 described in Section VII.9 on page 234 of that book.
Similar results in the context of Boolean algebras can be found in
\citep{BellBVM}, however, note that we cannot use the Boolean algebra
approach here since we are working in $ZFC_{count}$ and without powerset
we cannot form the completion of $Add(\omega,1)$. Nonetheless, the
proofs are very similar.} Using the fact that $PA$ proves internal $\Sigma_{1}^{0}$-completeness\footnote{See, for example, (BLiii) on page 17 of \citep{lindstrm2003aspects}.}
we see that $ZFC_{count}$ proves that $ZFC_{count}\vdash\forall x\forall y(t(\psi)(x,y)\leftrightarrow\neg t(\psi)(y,x))$
and thus, we may use (2) and (1) to show that that $ZFC_{count}$
proves 
\[
\Vdash_{Add(\omega,1)}\forall x\forall y(t(\psi)(x,y)\leftrightarrow\neg t(\psi)(y,x)).
\]
Moreover, using \ref{eq:1}, we see that 
\[
\Vdash_{Add(\omega,1)}\exists x\exists y(t(\psi)(x,y)\leftrightarrow t(\psi)(y,x))
\]
and so $ZFC_{count}$ proves that $\Vdash_{Add(\omega,1)}\bot$. This
implies that $ZFC_{count}$ is inconsistent. And since the equiconsistency
of $ZFC_{count}$ and $SOA$ is clearly provable in $PRA$, we see
that $SOA$ is also inconsistent as required. 
\end{proof}
Then since interpretations witnessing definitional equivalence must
be direct, we see that:
\begin{cor}
($PA$) $ZFC_{count}$ and $SOA$ are not definitionally equivalent,
if one of those theories is consistent. 
\end{cor}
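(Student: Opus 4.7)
The plan is to extract this as an immediate corollary of the preceding lemma by routing definitional equivalence through the syntactic characterization provided by Fact \ref{fact:DefEq}. Working in $PA$ and arguing contrapositively, I suppose that $ZFC_{count}$ and $SOA$ are definitionally equivalent and aim to conclude that both are inconsistent.

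By Fact \ref{fact:DefEq}, the assumed definitional equivalence supplies, in particular, a translation $t : \mathcal{L}_{SOA} \to \mathcal{L}_{\in}$ giving rise to a \emph{direct} interpretation (one-dimensional, unrelativized, identity-preserving) such that $ZFC_{count} \vdash t(\varphi)$ whenever $SOA \vdash \varphi$. But this is exactly the hypothesis of the preceding lemma, so that lemma delivers the inconsistency of $ZFC_{count}$. The $PRA$-provable equiconsistency of $ZFC_{count}$ and $SOA$ (already invoked in the proof of the lemma) then delivers the inconsistency of $SOA$ as well. Contrapositively, if either theory is consistent, then $ZFC_{count}$ and $SOA$ are not definitionally equivalent.

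There is no substantive obstacle here; the corollary is a one-line consequence once the lemma is in hand, and the proof of the lemma is where all the real work has taken place. The only point worth briefly checking is that the appeal to Fact \ref{fact:DefEq} is legitimate inside $PA$. Since that fact characterizes definitional equivalence in terms of the arithmetic existence of codes of translations satisfying $\Sigma^{0}_{1}$-style provability conditions, and since the translation produced from the assumed definitional equivalence can be coded as a natural number, the whole argument stays arithmetic and formalizes directly in $PA$ without any appeal to set-theoretic machinery.
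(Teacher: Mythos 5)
Your proposal is correct and matches the paper's own derivation, which obtains the corollary in one line from the preceding lemma via the observation (packaged in Fact \ref{fact:DefEq}) that interpretations witnessing definitional equivalence must be direct. The extra care you take in noting that the equiconsistency step and the appeal to Fact \ref{fact:DefEq} formalize in $PA$ is consistent with, and slightly more explicit than, what the paper says.
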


Thus, we have a proof of the arithmetic claim that $ZFC_{count}$
and $SOA$ are not definitionally equivalent conducted in a standard
theory of arithmetic, $PA$. One might hope that to obtain the result
above from a weaker theory like $PRA$. However, the use of internal
$\Sigma_{1}^{0}$-completeness in the proof above seems to tell us
that, at least for this strategy, some form of induction is required
in our metatheory.\footnote{We've tried a few other, less elegant, approaches and wherever we've
looked some induction seems to be required. } As such, we leave open the question of whether $PRA$ can prove that
$ZFC_{count}$ and $SOA$ are not definitionally equivalent.

\bibliographystyle{plainnat}
\bibliography{../../../Thesis}

\end{document}